
\documentclass[11pt]{article} 

\usepackage{graphicx}
\usepackage{amsfonts}
\usepackage{amsmath}
\usepackage{amscd}
\usepackage{amssymb}
\usepackage{verbatim}

\newenvironment{proof}{\noindent\emph{Proof}.}{$\square$\smallskip}
\newenvironment{proof*}{\noindent\emph{Proof}}{$\square$\smallskip}

\newtheorem{theorem}{Theorem}[section]

\newtheorem{Definition}[theorem]{Definition}
\newtheorem{lemma}[theorem]{Lemma}
\newtheorem{Example}[theorem]{Example}
\newtheorem{corollary}[theorem]{Corollary}
\newtheorem{Remark}[theorem]{Remark}
\newtheorem{proposition}[theorem]{Proposition}
\newtheorem{conjecture}[theorem]{Conjecture}

\newtheorem{Exercise}[theorem]{Exercise}
\newtheorem{Exercises}[theorem]{Exercises}
\newtheorem{Notation}[theorem]{Notation}

\newenvironment{definition}{\begin{Definition}\normalfont}{\end{Definition}}
\newenvironment{example}{\begin{Example}\normalfont}{\end{Example}}
\newenvironment{remark}{\begin{Remark}\normalfont}{\end{Remark}}

\newcommand{\br}{\ensuremath{\mathbb{R}}} 
\newcommand{\bz}{\ensuremath{\mathbb{Z}}} 
\newcommand{\bn}{\ensuremath{\mathbb{N}}} 
\newcommand{\bc}{\ensuremath{\mathbb{C}}} 
\newcommand{\id}{\ensuremath{\mathrm{id}}} 
\newcommand{\incl}{\ensuremath{\mathrm{incl}}} 

\newcommand{\co}{\ensuremath{\colon}} 

\newcommand{\sm}{\ensuremath{{\rm Sim}}} 
\newcommand{\Aut}{\ensuremath{{\rm Aut}}} 


\newcommand{\bi}{\ensuremath{{\rm Bi}}} 
                            
\title{Local Similarities and the Haagerup Property}  
\author{Bruce Hughes (Vanderbilt University)\\
\\
with an appendix by Daniel S. Farley (Miami University)}

\begin{document}

\maketitle

\begin{abstract}A new class of groups, the locally finitely determined groups of local similarities on compact ultrametric spaces, is introduced and it is proved that these groups have the Haagerup property (that is, they are a-T-menable in the sense of Gromov).
The class includes Thompson's groups, which have already
been shown to have the Haagerup property by D. S. Farley, as
well as many other groups acting on boundaries of trees.
A sufficient condition, used in this paper, for the Haagerup property is shown in the appendix by D. S. Farley to be equivalent to the well-known
property of having a proper action on a space with walls. 
\end{abstract}

{\footnotesize
\thanks{Supported in part by NSF Grant 
DMS--0504176 \newline 
{\hspace*{15pt}}2000 Math.\ Subject Class. Primary 
20F65, 22D10, 54E45 
}
}
\tableofcontents


\section{Introduction}      

This paper is motivated by D. Farley's theorem \cite{Far} that
R. Thompson's famous infinite, finitely presented, simple group $V$ has 
the Haagerup property.
Farley's result and method are extended here to a new class of countable, discrete groups, which includes many Thompson-like groups and groups of local similarities on locally rigid, compact ultrametric spaces.

A countable discrete group $\Gamma$ has the {\it Haagerup property} if there exists an isometric action $\Gamma\curvearrowright\mathcal{H}$ on some affine Hilbert space $\mathcal{H}$ such that the action is metrically proper, which means for every bounded subset $B$ of $\mathcal{H}$, the set 
$\{ g\in\Gamma ~|~ gB\cap B\neq \emptyset\}$ is finite.
The Haagerup property is also called 
{\it Gromov's a-T-menability property}.
We refer to 
Cherix, Cowling, Jolissaint, Julg, and Valette \cite{CCJJV}
for a detailed discussion of the Haagerup property.

One reason for interest in the Haagerup property is that Higson and 
Kasparov \cite{HK} proved that the Baum--Connes conjecture 
with coefficients
is true for groups with the Haagerup property. 

The groups for which we verify the Haagerup property come with
actions on compact ultrametric spaces.
Examples of such spaces are the end spaces, or boundaries, of rooted, locally finite simplicial trees. 
See Section~\ref{section:ultrametric}, especially Remark~\ref{remark:trees}, for more details.

The actions of the groups on compact ultrametric spaces are by local similarities. There is a finiteness condition on the local restrictions
of these local similarities. See Section~\ref{section:FinitelyDetermined}
for the precise definitions.

The following is the main result of this paper.

\begin{theorem}
\label{thm:MainTheorem} If $\Gamma$ is a locally finitely determined group
of local similarities on a compact ultrametric space $X$,
then $\Gamma$ has the Haagerup property.
\end{theorem}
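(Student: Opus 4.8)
The plan is to construct an explicit proper affine isometric action of $\Gamma$ on a Hilbert space, following Farley's strategy for Thompson's group $V$ but adapted to the geometry of a general compact ultrametric space $X$. Since the group acts by local similarities, the essential combinatorial data is the collection of balls of $X$ together with the way local similarities carry one ball to another. I would first identify a countable $\Gamma$-invariant family of balls — presumably the balls appearing in the domains of the pieces of the local similarities, or more robustly all balls of $X$ of which there are only countably many in a locally finite setting — and use this family as the index set for the coordinates of the Hilbert space. The affine action would then be built from a cocycle measuring, for each $g\in\Gamma$, the symmetric difference between a canonical system of balls and its $g$-translate.

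Concretely, I would follow the appendix's reduction and aim to produce a \emph{space with walls} on which $\Gamma$ acts properly, since the appendix asserts that this is equivalent to the sufficient condition used for the Haagerup property. The walls should come from the ultrametric structure: each ball $B$ determines a partition of $X$ into $B$ and its complement, and more usefully the local-similarity structure lets one attach walls to the finite pieces on which group elements restrict to genuine similarities. The key steps, in order, would be: (1) define the vertex set of the wall space as the set of admissible ``ball decompositions'' of $X$ (finite partitions of $X$ into balls that are compatible with the local structure), with $\Gamma$ acting by pushing forward a decomposition through a group element; (2) define walls separating two such decompositions, so that the wall metric counts the number of balls that must be refined or merged to pass from one decomposition to another; (3) verify $\Gamma$-invariance of the wall structure, which follows because local similarities send balls to balls; and (4) establish properness, namely that for each decomposition $D$ the set of $g\in\Gamma$ moving $D$ a bounded wall-distance is finite.

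The main obstacle, as I see it, is step (4), the properness estimate, and it is exactly here that the \emph{local finite determinacy} hypothesis must do its work. Properness requires that only finitely many group elements fix or nearly fix a given ball decomposition, and a priori the group of local similarities could contain continuum-many elements agreeing on a fixed finite partition but differing on small scales. The finiteness condition on the local restrictions of the local similarities — the defining feature of a locally finitely determined group, spelled out in Section~\ref{section:FinitelyDetermined} — should guarantee that, once the action of $g$ on a sufficiently fine decomposition is specified, $g$ itself is determined up to finitely many choices. Thus I would need to show that the wall metric is proper by relating bounded wall-distance to agreement on a fixed finite collection of balls, and then invoking finite determinacy to bound the number of such $g$. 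A secondary technical point is ensuring the wall space is connected and that the wall pseudometric is genuinely a metric that induces a proper action; these should follow from the ultrametric (tree-like) combinatorics, where any two decompositions have a common refinement, giving a lattice structure that makes the wall counting well-behaved.
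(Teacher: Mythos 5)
Your high-level plan has the right ingredients (a counting cocycle, properness via the finite similarity structure), but as written it contains a genuine gap at step (1): the action of $\Gamma$ on ``ball decompositions'' by push-forward is not well defined. A local similarity $g$ sends only sufficiently small balls to balls; the image of an arbitrary ball in a partition is merely a finite disjoint union of balls (Lemma~\ref{lemma:OpenClosed}), and the natural repairs fail to give a group action --- taking maximal balls in $g(P)$ is not associative, and refining $D$ by the maximum partition of $g$ before pushing forward gives $g\cdot(h\cdot D)$ a possibly strict refinement of $(gh)\cdot D$ when cancellation enlarges the regions of $gh$. The paper sidesteps this entirely: instead of acting on decompositions, it builds a zipper action (Definition~\ref{def:ZipperAction}) on the set $\mathcal{E}$ of classes $[f,B]$ of local similarity embeddings of closed balls, modulo precomposition by elements of $\sm$, with $\Gamma$ acting by postcomposition $g[f,B]=[gf,B]$, which is always defined; the zipper $Z$ is the set of inclusions of balls, and $|gZ\,\triangle\,Z|$ counts balls properly containing maximum regions of $g$ or $g^{-1}$ (Lemma~\ref{lemma:bijection}). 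Note also that the paper deliberately does \emph{not} extract a wall space directly from this data --- Example~\ref{example:NoWalls} shows the naive walls on $\mathcal{E}$ fail the finite-separation axiom --- and the equivalence with wall spaces is obtained in the appendix only by a dual construction whose points are the translates $gZ$.

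The second, acknowledged, obstacle --- properness --- is also not resolved by your sketch. You correctly locate where local finite determinacy must enter, but the assertion that bounded wall-distance reduces to ``agreement on a fixed finite collection of balls'' hides the real work. The paper needs two things here: Lemma~\ref{lemma:RefineMax} (resting on Lemma~\ref{lemma:MaxPartition}, which is where finiteness of each $\sm(B_1,B_2)$ is actually used), saying that only finitely many $g$ have their maximum partitions for $g$ and $g^{-1}$ refined by a \emph{fixed} pair of partitions; and, crucially, the argument of Lemma~\ref{lemma:ZipperTwo} producing such a fixed partition $\mathcal{P}_r$ uniformly for all $g$ with $|gZ\,\triangle\,Z|\le r$, by showing that at each point $x$ the maximum regions of such $g$ containing $x$ form a finite chain with a minimum $R_{r,x}$. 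Without this uniform common refinement, finite determinacy gives you finiteness only one partition at a time, which does not bound $\{g:\ d(gD,D)\le r\}$. So the proposal is a reasonable roadmap, but both the construction of the $\Gamma$-set and the properness estimate require ideas it does not yet contain.
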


Examples of groups satisfying the hypothesis of Theorem~\ref{thm:MainTheorem} 
are given in Section~\ref{section:examples}.
These include Thompson's groups ($F$, $T$, and $V$) as well as other
Thompson-like groups. Moreover, if $X$ is a locally rigid, compact ultrametric space, then the full group $LS(X)$ of all local similarities
on $X$ is shown to satisfy the hypothesis. Such spaces include the end spaces of rigid trees in the sense of Bass and Lubotzky \cite{BaL}
with many interesting examples constructed by Bass and Kulkarni
\cite{BaK} and Bass and Tits \cite{BaT}.
See Hughes \cite{HugTUNG} for more on locally rigid ultrametric
spaces. 

Theorem~\ref{thm:MainTheorem} is proved in Section~\ref{sec:the main construction} by showing that the given action of $\Gamma$ on $X$ induces a zipper action of $\Gamma$ on some set. 
Zipper actions are defined in Section~\ref{section:zipper}. This concept
is implicit in Farley \cite{Far} and is a special case of Valette's characterization of the Haagerup property for countable, discrete groups \cite[Proposition 7.5.1]{CCJJV}.

In the appendix, Farley provides a proof that zipper actions are equivalent to proper actions on spaces with walls,
a well-known sufficient condition for the Haagerup property (see
Cherix et al.\ \cite[Section 1.2.7]{CCJJV}).
In addition to \cite{Far}, Farley has a separate proof \cite{Far2,Far3}, using this condition, that Thompson's groups have the Haagerup property. 
See
Cherix, Martin, and Valette \cite{CMV} for a characterization of the 
Haagerup property for countable, discrete groups in terms of spaces of measured walls.
One should also note the similarity of zipper actions with the criterion of Sageev \cite{Sag}
for a group pair to be multi--ended.
Example~\ref{example:NoWalls} shows that zipper actions do not naively 
lead to spaces with walls. 

\bigskip
\noindent
{\bf Acknowledgments.} I have benefited from conversations with
Dan Farley, Slava Grigorchuk, Mark Sapir,
Shmuel Weinberger, and Guoliang Yu.


\section{Ultrametric spaces and local similarities}
\label{section:ultrametric}

This section contains some background on ultrametric spaces and
local similarities.

\begin{definition} An {\it ultrametric space} is a metric space $(X,d)$ such that $d(x,y)\leq\max\{d(x,z), d(z,y)\}$ for all
$x,y,z\in X$.
\end{definition}

Classical examples of ultrametrics arise from $p$-adic norms, where $p$ is a prime.
For example, the $p$-adic norm $\vert\cdot\vert_p$ on the integers $\bz$ is defined
by $\vert x\vert_p := p^{-\max\{ n\in\bn\cup\{ 0\} ~\vert~ p^n ~\text{divides $x$} \}}$. The corresponding metric on $\bz$
is an ultrametric.
For more on the relationship between ultrametrics and $p$-adics, see Robert \cite{Rob}.

For the purposes of this paper, the most important examples of ultrametrics arise as end spaces of trees, which are recalled in the following example.

\begin{example} 
\label{example:ends}
Let $T$ be a {\it locally finite simplicial tree}; that is, $T$ is the geometric realization of
a locally finite, one-dimensional, simply  connected, simplicial complex.
There is a natural unique metric $d$ on $T$ such that 
$(T,d)$ is an $\br$-tree\footnote{An {\it $\br$-tree} is a metric space $(T,d)$ that is uniquely arcwise connected, and for any two 
points $x,y\in T$ the unique arc from $x$ to $y$ is
isometric to the subinterval $[0, d(x,y)]$ of $\br$.}, 
every edge is isometric
to the closed unit interval $[0,1]$, and the distance between distinct vertices $v_1, v_2$ is the minimum number
of edges in a sequence of edges $e_0, e_1,\dots, e_n$ with
$v_1\in e_0$, $v_2\in e_n$ and $e_i\cap e_{i+1}\not=\emptyset$ for $0\leq i\leq n-1$.
Whenever we refer to a locally finite simplicial tree $T$, the metric $d$ on $T$ will be understood
to be the natural one just described.
Choose a {\it root} (i.e., a base vertex) $v\in T$ and define
the {\it end space} of  $(T,v)$  by
$$end(T,v) = \{ x\co [0,\infty)\to T ~|~ x(0)=v \text{ and } x \text{ is an isometric 
embedding}\}.$$
For $x,y\in end(T,v)$, define
$$d_e(x,y) = \left\{ \begin{array}{ll}
0 & \text{if $x=y$}\\
1/e^{t_0} &\text{if $x\not= y$ and $t_0=\sup\{ t\geq 0 ~|~ x(t)=y(t)\}$}. \end{array}\right.
$$
It follows that $(end(T,v),d_e)$ is a 
compact ultrametric space of diameter $\leq 1$.
\end{example}

\begin{remark}
\label{remark:trees}
There is a well-known relationship between trees and ultrametrics.
For example,
if $(X,d)$ is a compact ultrametric space, then there exists 
a rooted, locally finite simplicial tree $(T,v)$ and a 
homeomorphism $h\co [0,\infty)\to [0,\infty)$ such that $(X, hd)$ is isometric to $end(T,v)$. 
Moreover, every compact ultrametric space $(X,d)$ of diameter $\leq 1$
is isometric to the endspace of a rooted, proper $\br$-tree $(T,v)$, but not necessarily one whose edges have length less than $1$.
See Hughes \cite{Hug} and \cite{HugTUNG} for more details and further references. 
\end{remark}

If $(X,d)$ is a metric space, $x\in X$ and $\epsilon >0$, then we use the notation
$B(x,\epsilon)=\{ y\in X ~|~ d(x,y)<\epsilon\}$ for the {\it open ball about $x$ of radius
$\epsilon$}, and $\bar B(x,\epsilon)=\{ y\in X ~|~ d(x,y)\leq\epsilon\}$ 
for the {\it closed ball about $x$ of radius
$\epsilon$}.

In an ultrametric space, if two balls intersect, then one must contain the other. Moreover, closed balls are open sets and open balls are closed
sets. In the compact case, there is the following result, the proof of which is elementary and is left to the reader.

\begin{lemma}
\label{lemma:OpenClosed}
If $X$ is a compact ultrametric space and
$Y\subseteq X$, then the following are equivalent:
\begin{enumerate} 
\item $Y$ is open and closed.
\item $Y$ is a finite union of open balls in $X$.
\item $Y$ is a finite union of closed balls in $X$. $\quad \square$
\end{enumerate}
\end{lemma}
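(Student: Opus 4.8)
The plan is to prove the two implications $(2)\Rightarrow(1)$ and $(3)\Rightarrow(1)$ directly from the two facts recalled just before the statement, and then to obtain $(1)\Rightarrow(2)$ and $(1)\Rightarrow(3)$ together by a single compactness argument; this establishes the equivalence of all three conditions.

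For $(2)\Rightarrow(1)$ and $(3)\Rightarrow(1)$ I would first observe that in an ultrametric space every ball, open or closed, is simultaneously open and closed: an open ball is open by definition and closed by the quoted fact, and a closed ball is closed by definition and open by the quoted fact. Hence a finite union of open balls, or a finite union of closed balls, is a finite union of clopen sets and is therefore itself open and closed. This disposes of both implications with no further work.

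For the converse direction, suppose $Y$ is open and closed. Since $Y$ is a closed subset of the compact space $X$, it is compact. Because $Y$ is open, for each $x\in Y$ there is $\epsilon_x>0$ with $B(x,\epsilon_x)\subseteq Y$. The family $\{B(x,\epsilon_x)\}_{x\in Y}$ is then an open cover of $Y$, and compactness yields a finite subcover, expressing $Y$ as a finite union of open balls; this gives $(1)\Rightarrow(2)$. For $(1)\Rightarrow(3)$ I would run the same argument with closed balls: choosing any $r_x$ with $0<r_x<\epsilon_x$ gives $x\in\bar B(x,r_x)\subseteq B(x,\epsilon_x)\subseteq Y$, and since $\bar B(x,r_x)$ is open (again by the quoted fact) the sets $\{\bar B(x,r_x)\}_{x\in Y}$ form an open cover of $Y$; a finite subcover then exhibits $Y$ as a finite union of closed balls.

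I do not expect a serious obstacle here: the only point requiring care is that openness of $Y$ is used to produce the balls that make up the cover, while closedness (hence compactness) is used to extract a finite subcover, so both halves of the clopen hypothesis are genuinely needed. The ultrametric structure enters only through the single fact that balls are clopen, which is precisely what keeps a finite union of balls clopen and what allows closed balls to serve as members of an open cover.
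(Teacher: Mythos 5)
Your argument is correct and complete; the paper itself leaves this lemma to the reader as elementary, and what you have written is precisely the intended standard argument (balls are clopen in an ultrametric space, so finite unions of balls are clopen; conversely a clopen set is compact and is covered by balls contained in it, and a finite subcover realizes it as a finite union of open, respectively closed, balls). Nothing further is needed.
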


We conclude this section with the basic definitions concerning local similarities.

\begin{definition}
If $\lambda > 0$, then a map $g\co X\to Y$ between metric spaces
$(X,d_X)$ and $(Y,d_Y)$
is a {\it $\lambda$-similarity} provided $d_Y(gx, gy)=\lambda d_X(x,y)$
for all $x,y\in X$.
\end{definition}

\begin{definition} A homeomorphism $g\co X\to X$ between metric spaces is
a {\it local similarity} if for every $x\in X$ there exists $r, \lambda >0$
such that $g$ restricts to a surjective $\lambda$-similarity 
$g|\co B(x,r)\to B(gx,\lambda r)$.
\end{definition}

\begin{definition} For a metric space $X$, $LS(X)$ denotes the {\it group
of all local similarities from $X$ onto $X$}.
\end{definition}

We will be concerned with the group $LS(X)$ only when $X$ is a compact ultrametric space. It has a natural topology (the compact-open topology),
but in this paper we always endow subgroups of $LS(X)$ with the discrete
topology. 


\section{Locally finitely determined groups of local similarities}
\label{section:FinitelyDetermined}

In this section, we introduce the groups that are the object of study in this paper and establish some of their elementary properties.
Throughout this section,
let $X$ be a compact ultrametric space with ultrametric $d$.
The groups  are defined in terms of an extra structure on $X$, which we now define.

\begin{definition}
A {\it finite similarity structure for $X$}
is a function, denoted by $\sm$, that assigns to
each ordered pair $B_1, B_2$ of closed balls in $X$
a (possibly empty)
finite set $\sm(B_1,B_2)$ of surjective similarities
$B_1\to B_2$ such that whenever 
$B_1, B_2, B_3$ are closed balls in $X$, the following properties
hold:
\begin{enumerate}
\item (Identities) $\id_{B_1}\in\sm(B_1,B_1)$.
\item (Inverses) If $h\in\sm(B_1,B_2)$, then $h^{-1}\in\sm(B_2,B_1)$.
\item (Compositions) 
If $h_1\in\sm(B_1,B_2)$ and $h_2\in\sm(B_2,B_3)$, then
$h_2h_1\in\sm(B_1,B_3)$.
\item (Restrictions)
If $h\in\sm(B_1,B_2)$ and $B_3\subseteq B_1$, 
then $h|B_3\in\sm(B_3,h(B_3))$.
\end{enumerate}
\end{definition}

When it is necessary to indicate the dependence of $\sm$ on $X$, the notation $\sm_X$ is used. 

The word {\it finite} is used here to describe the similarity structure, not to imply that there only finitely many similarities involved 
(in general, there are infinitely many), rather to emphasize that given any two closed balls only finitely many similarities between them are chosen by
$\sm$.

\begin{example}
The {\it trivial} finite similarity structure on $X$ is given
by
$$\sm(B_1, B_2) = \begin{cases}
\{\id_{B_1}\} & \text{if $B_1=B_2$}\\
~~~\emptyset & \text{otherwise}\end{cases}$$
for each pair of closed balls $B_1, B_2$ in $X$.
\end{example}

More examples are given in the next section.

\begin{definition} Let $B$ be a closed ball in $X$.
A function $g\co B\to X$ is a {\it local similarity embedding}
if for each $x\in B$ there exist $r,\lambda > 0$ such that
$\bar B(x,r)\subseteq B$ and 
$g|\co\bar B(x,r)\to \bar B(gx,\lambda r)$ is a surjective
$\lambda$-similarity. If the choices can be made so that
$g|\in\sm(\bar B(x,r),\bar B(gx,\lambda r))$, then $g$ {\it
is locally determined by $\sm$}.
\end{definition}

\begin{definition}
\label{definition:FinitelyDetermined}
A subgroup $\Gamma$ of $LS(X)$ is {\it locally determined}
by the finite similarity structure $\sm$ for $X$
if every $g\in\Gamma$ is locally determined by $\sm$.
In this case, the group $\Gamma$ is said to be a {\it locally finitely determined
group of local similarities on $X$}.
\end{definition}

\begin{remark} Given the finite similarity structure $\sm_X$, there is a unique largest subgroup
$\Gamma\leq LS(X)$ such that $\Gamma$ is locally determined by $\sm_X$. It is defined by
$$\Gamma = \{ g\in LS(X) ~|~ g \text{ is locally determined by } \sm_X\}.$$
\end{remark}

Throughout the rest of this section,
let $\Gamma\leq LS(X)$ be a group locally determined by the
finite similarity structure $\sm$. Recall that $\Gamma$ is given the discrete topology.

\begin{definition}
A {\it region for $g\in\Gamma$} is a closed ball $B$ in $X$
such that $g(B)$ is a ball and $g|B\in\sm(B, g(B))$.
A region $B$ for $g\in\Gamma$ is a {\it maximum region for
$g$} if it is not properly contained in any region for $g$.
\end{definition}

\begin{lemma}
For each $g\in\Gamma$ and for each $x\in X$ there exists
a unique maximum region $B$ for $g$ such that $x\in B$.
\end{lemma}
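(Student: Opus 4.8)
The plan is to build the maximum region by starting from a single region of positive radius at $x$ and then climbing a chain of enclosing balls that compactness forces to be \emph{finite}. First I would record two preliminary facts. Since $g$ is locally determined by $\sm$, local determination at $x$ supplies $r_0,\lambda>0$ with $g|\co\bar B(x,r_0)\to\bar B(gx,\lambda r_0)$ a surjective $\lambda$-similarity lying in $\sm(\bar B(x,r_0),\bar B(gx,\lambda r_0))$; hence $B_0=\bar B(x,r_0)$ is a region for $g$ containing $x$, of positive radius. Second, I would observe that regions are downward closed among closed balls: if $B'$ is a region and $B\subseteq B'$ is a closed ball, then by the Restrictions axiom $g|B=(g|B')|B\in\sm(B,g(B))$, while $g(B)$ is the image of a ball under the similarity $g|B'$ and so is itself a ball; thus $B$ is a region.

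Next I would use the ultrametric structure and compactness of $X$ to control the balls lying \emph{above} $B_0$. Because any two closed balls containing the common point $x$ are nested, the closed balls containing $x$ form a chain, each of the form $\bar B(x,r)$. The crucial point is that only finitely many closed balls can contain the fixed positive-radius ball $B_0=\bar B(x,r_0)$: covering the compact space $X$ by its finitely many disjoint closed balls of radius $r_0$ (total boundedness), every closed ball of radius $\geq r_0$ is a union of these finitely many atoms, and a nested family of such unions through $x$ has length bounded by the number of atoms. Hence the balls containing $B_0$ form a finite strictly increasing chain $B_0=C_0\subsetneq C_1\subsetneq\cdots\subsetneq C_k=X$.

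Finally I would extract the maximum region. By downward closure of regions together with the fact that $C_0$ is a region, the regions among $C_0,\dots,C_k$ form an initial segment $C_0,\dots,C_m$; set $B=C_m$. For maximality and uniqueness I would argue that every region $D$ containing $x$ is comparable with $C_0$ (both contain $x$): if $D\supseteq C_0$ then $D$ is one of the $C_i$ and, being a region, satisfies $i\leq m$, so $D\subseteq B$; if $D\subseteq C_0$ then $D\subseteq B$ as well. Thus $B$ contains every region through $x$, so no region properly contains $B$ (any such would contain $x$), i.e.\ $B$ is a maximum region for $g$ with $x\in B$; and any maximum region through $x$ is contained in $B$ and hence equals $B$.

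The main obstacle is the compactness step, namely showing that $B_0$ is contained in only finitely many closed balls. This is exactly what forces the ascending chain of regions to terminate and makes the maximum exist, in sharp contrast with the descending chain toward $x$, which may well be infinite; this is also why the positive radius $r_0>0$ is essential. Everything else is routine bookkeeping with the nesting of balls in an ultrametric space and the Identities, Inverses, and Restrictions axioms for $\sm$.
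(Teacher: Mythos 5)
Your proposal is correct and follows essentially the same route as the paper: start from a region through $x$ supplied by local determination, use compactness to see that this positive-radius ball lies in only finitely many closed balls (all nested, since they share the point $x$), take the largest one in that chain which is still a region, and get uniqueness from the fact that intersecting balls are nested. You spell out the finiteness step (via the atoms of radius $r_0$) and the downward closure of regions more explicitly than the paper does, but the underlying argument is the same.
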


\begin{proof}
By definition, $x$ is contained in some region $R$ for $g$.
Compactness of $X$ implies $R$ is contained in only finitely
many closed balls of $X$. Thus, there is a largest (with respect to set inclusion) such ball $B$ that is a 
region for $g$, and it must be a maximum region. It is the unique
maximum region for $g$ containing $x$ because any two intersecting
balls of $X$  have the property that one contains the other.
\end{proof}

It follows that for each $g\in\Gamma$, the maximum regions of
$g$ form a partition of $X$
(that is, the maximum regions of $g$ cover $X$ and are mutually disjoint), and any closed ball in $X$ contains, or is
contained in, a maximum region of $g$.

\begin{definition} If $g\in\Gamma$, then the {\it maximum
partition for $g$} is the partition of $X$ into the maximum regions
of $g$.
\end{definition}

Thus, any partition of $X$ into regions for an element $g\in\Gamma$ refines the
maximum partition for $g$.

The following lemma follows immediately from the definitions and
the Inverses Property.

\begin{lemma} If $g\in\Gamma$ and $R$ is a region for $g$, then
$g(R)$ is a region for $g^{-1}$. In addition, if $R$ is a maximum
region for $g$, then $g(R)$ is a maximum region for $g^{-1}$.
$\quad\square$
\end{lemma}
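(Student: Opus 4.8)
The plan is to unwind the definition of a region and invoke the Inverses property of $\sm$ directly; the lemma is essentially formal. First I would check that $g(R)$ qualifies as a closed ball: since $R$ is a region for $g$ we have $g|R\in\sm(R,g(R))$, and $\sm$ assigns similarities only to ordered pairs of closed balls, so $g(R)$ is automatically a closed ball. The key step is then the set-theoretic identity $g^{-1}|g(R)=(g|R)^{-1}$, which is valid because $g|R\co R\to g(R)$ is a bijection. Applying the Inverses property to $g|R\in\sm(R,g(R))$ yields $(g|R)^{-1}\in\sm(g(R),R)$, so $g^{-1}|g(R)\in\sm(g(R),R)$. Since also $g^{-1}(g(R))=R$ is a ball, this shows $g(R)$ is a region for $g^{-1}$.

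For the statement about maximum regions, I would argue by contradiction. Suppose $R$ is a maximum region for $g$ but $g(R)$ is not a maximum region for $g^{-1}$, so that $g(R)\subsetneq S$ for some region $S$ for $g^{-1}$. Applying the first part of the lemma to $g^{-1}$, and using $(g^{-1})^{-1}=g$, I conclude that $g^{-1}(S)$ is a region for $g$. Because $g$ is injective and $g(R)\subsetneq S$, applying $g^{-1}$ preserves the strict inclusion, giving $R=g^{-1}(g(R))\subsetneq g^{-1}(S)$. Thus $R$ is properly contained in a region for $g$, contradicting the maximality of $R$. Hence $g(R)$ must be a maximum region for $g^{-1}$.

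I do not expect any genuine obstacle, since the whole argument is a direct manipulation of the definitions. The only point requiring a moment of care is the identification $g^{-1}|g(R)=(g|R)^{-1}$, which reduces to the observation that $g|R$ restricts to a bijection of $R$ onto $g(R)$; once this is in hand, both the appeal to the Inverses property and the contradiction argument for maximality are purely formal.
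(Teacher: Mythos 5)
Your proof is correct and matches the paper's intent exactly: the paper omits the argument, stating only that the lemma ``follows immediately from the definitions and the Inverses Property,'' which is precisely the route you take. Both the identification $g^{-1}|g(R)=(g|R)^{-1}$ and the contradiction argument for maximality are the standard unwinding the author had in mind.
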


\begin{lemma}
\label{lemma:MaxPartition} Let $\mathcal P_+$ and $\mathcal P_-$
be two partitions of $X$ into closed balls.
The set 
\begin{align*}
\Gamma(\mathcal P_\pm) 
=
 \{ g\in \Gamma ~|~ 
&\mathcal{P}_+  \text{ is the maximum partition for } g \text{ and }\\
&\mathcal P_-  \text{ is
the maximum partition for } g^{-1}
\}\end{align*}
is finite.
\end{lemma}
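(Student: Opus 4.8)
The plan is to reduce everything to the finiteness built into the similarity structure $\sm$. First I would record that any partition of a compact ultrametric space into closed balls is automatically finite: closed balls are open sets (as noted just before Lemma~\ref{lemma:OpenClosed}), so such a partition is an open cover of the compact space $X$, and since its members are pairwise disjoint the cover cannot be shrunk, so it must already be finite. Write $\mathcal{P}_+ = \{B_1,\dots,B_n\}$ and $\mathcal{P}_- = \{C_1,\dots,C_m\}$.

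The substantive step is to see that every $g\in\Gamma(\mathcal P_\pm)$ carries the blocks of $\mathcal{P}_+$ bijectively onto the blocks of $\mathcal{P}_-$. Each $B_i$ is a maximum region for $g$, so by the preceding lemma $g(B_i)$ is a maximum region for $g^{-1}$, hence a block of $\mathcal{P}_-$. Since $g$ is a homeomorphism of $X$, the sets $g(B_1),\dots,g(B_n)$ are pairwise disjoint and cover $X$; being a subfamily of the partition $\mathcal{P}_-$ that already exhausts $X$, they must constitute all of $\mathcal{P}_-$. Thus either $m\neq n$, in which case $\Gamma(\mathcal P_\pm)=\emptyset$ and we are done, or $m=n$ and $g$ induces a bijection $\sigma$ of $\{1,\dots,n\}$ with $g(B_i)=C_{\sigma(i)}$ for every $i$.

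With this in hand the count is immediate. By definition of maximum region, $g|B_i\in\sm(B_i,g(B_i))=\sm(B_i,C_{\sigma(i)})$, which is a \emph{finite} set precisely because $\sm$ is a finite similarity structure. Since the blocks $B_i$ cover $X$, the element $g$ is completely determined by the permutation $\sigma$ together with the tuple of restrictions $(g|B_1,\dots,g|B_n)$. Therefore
$$\#\,\Gamma(\mathcal P_\pm)\;\leq\;\sum_{\sigma}\;\prod_{i=1}^{n}\#\,\sm(B_i,C_{\sigma(i)}),$$
a finite sum, taken over the finitely many bijections $\sigma$, of finite products of finite cardinalities, and hence finite.

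The only place requiring genuine care is the middle step: verifying that $g$ must send $\mathcal{P}_+$ onto $\mathcal{P}_-$ as a whole, rather than merely refining or coarsening it. This rests on the maximality demanded in the definition of $\Gamma(\mathcal P_\pm)$ and on the earlier lemma relating the maximum regions of $g$ to those of $g^{-1}$. Once that bijection of blocks is established, the finiteness conclusion is purely formal, flowing directly from the finiteness clause in the definition of a finite similarity structure.
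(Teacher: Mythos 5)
Your proof is correct and follows essentially the same route as the paper's: both arguments use the preceding lemma to see that $g$ induces a bijection from $\mathcal{P}_+$ to $\mathcal{P}_-$ and then inject $\Gamma(\mathcal P_\pm)$ into the finite disjoint union, over all such bijections $\sigma$, of the finite products $\prod_i \sm(B_i, C_{\sigma(i)})$. The only difference is that you spell out explicitly why the induced map on blocks is a bijection onto all of $\mathcal{P}_-$ and why the partitions are finite, points the paper leaves implicit.
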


\begin{proof}
Say $\mathcal{P}_+ =\{ B_1,\dots, B_n\}$ where $n=|\mathcal{P}_+|$ is the cardinality of
$\mathcal{P}_+$.
Let $\bi(\mathcal{P}_+,\mathcal{P}_-)$ denote the finite set of bijections from
$\mathcal{P}_+$ to $\mathcal{P}_-$.
For $h\in \bi(\mathcal{P}_+,\mathcal{P}_-)$, let $S_h :=
\prod_{i=1}^n \sm(B_i,h(B_i))$ and note that $S_h$ is finite. 
Define the finite set 
$F$ to be the disjoint union $F:=\coprod_{h\in\bi(\mathcal{P}_+,\mathcal{P}_-)}S_h$,
which we prefer to write as
$F= \bigcup_{h\in\bi(\mathcal{P}_+,\mathcal{P}_-)}(h,S_h)$.
If $g\in\Gamma(\mathcal P_\pm)$, then $g_\ast\in\bi(\mathcal{P}_+,\mathcal{P}_-)$ is defined by
$g_\ast(B)=g(B)$ for all $B\in\mathcal P_+$.
Clearly, there is an injection $\Gamma(\mathcal P_\pm)\to F$
given by $g\mapsto (g_\ast, (g|B_1,\dots,g|B_n))$.
\end{proof}

Recall that if $\mathcal{P}$ and $\mathcal{Q}$ are two collections, then
$\mathcal{P}$ {\it refines} $\mathcal{Q}$ means for every $P\in\mathcal{P}$ there exists $Q\in\mathcal{Q}$ such that $P\subseteq Q$.

\begin{lemma}
\label{lemma:RefineMax}
Let $\mathcal P_+$ and $\mathcal P_-$
be two partitions of $X$ into closed balls.
The set 
\begin{align*}
\Gamma_{\mathrm{ref}}(\mathcal P_\pm)
=
\{ g\in \Gamma ~|~ &\mathcal P_+ \text{ refines
the maximum partition for } g \text{ and }\\
&\mathcal P_- \text{ refines
the maximum partition for } g^{-1}
\}\end{align*} 
is finite.
\end{lemma}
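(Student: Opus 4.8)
The plan is to reduce $\Gamma_{\mathrm{ref}}(\mathcal{P}_\pm)$ to a finite union of sets of the form $\Gamma(\mathcal{Q}_\pm)$, each of which is finite by Lemma~\ref{lemma:MaxPartition}. The key observation is that if $g\in\Gamma_{\mathrm{ref}}(\mathcal{P}_\pm)$, then $\mathcal{P}_+$ refines the maximum partition for $g$, so the maximum partition for $g$ is a \emph{coarsening} of $\mathcal{P}_+$. Since $\mathcal{P}_+$ is itself a finite partition into closed balls (finiteness following from Lemma~\ref{lemma:OpenClosed}, as a partition of the compact space $X$ into open-and-closed balls must be finite), it has only finitely many coarsenings. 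The same applies to $g^{-1}$ and $\mathcal{P}_-$.

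\medskip

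\noindent\textbf{The steps} I would carry out are as follows. First I would observe that $\mathcal{P}_+$ and $\mathcal{P}_-$ are finite: each is a partition of $X$ into closed balls, each closed ball is open-and-closed, and compactness forces finiteness (invoking Lemma~\ref{lemma:OpenClosed}). Second, I would note that for any $g\in\Gamma_{\mathrm{ref}}(\mathcal{P}_\pm)$, the maximum partition for $g$ is a partition of $X$ into closed balls that is refined by $\mathcal{P}_+$; hence it belongs to the finite collection $\mathcal{C}_+$ of all partitions of $X$ into closed balls that are coarser than $\mathcal{P}_+$. This collection is finite because each such partition is obtained by grouping the finitely many blocks of $\mathcal{P}_+$, and there are only finitely many ways to do so. Symmetrically, the maximum partition for $g^{-1}$ lies in the finite collection $\mathcal{C}_-$ of coarsenings of $\mathcal{P}_-$. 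Third, I would write
\begin{equation*}
\Gamma_{\mathrm{ref}}(\mathcal{P}_\pm) = \bigcup_{\mathcal{Q}_+\in\mathcal{C}_+} \ \bigcup_{\mathcal{Q}_-\in\mathcal{C}_-} \Gamma(\mathcal{Q}_+,\mathcal{Q}_-),
\end{equation*}
where $\Gamma(\mathcal{Q}_+,\mathcal{Q}_-)$ is the set from Lemma~\ref{lemma:MaxPartition} of those $g$ whose maximum partition is exactly $\mathcal{Q}_+$ and whose inverse's maximum partition is exactly $\mathcal{Q}_-$. This union is a finite union of finite sets, and therefore finite.

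\medskip

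\noindent\textbf{The main subtlety} to verify carefully is that a coarsening of a partition into closed balls by grouping blocks together indeed yields closed balls, so that $\mathcal{C}_+$ consists of genuine partitions into closed balls and Lemma~\ref{lemma:MaxPartition} applies directly. In fact this is automatic from the ultrametric structure: if the maximum partition of $g$ is refined by $\mathcal{P}_+$, each maximum region $B$ for $g$ is a closed ball containing some block of $\mathcal{P}_+$, and since any two intersecting balls in an ultrametric space are nested, $B$ is precisely the union of those blocks of $\mathcal{P}_+$ it contains. Thus the assignment $g\mapsto(\text{max partition of }g,\ \text{max partition of }g^{-1})$ lands in the finite set $\mathcal{C}_+\times\mathcal{C}_-$, the fibers are the finite sets of Lemma~\ref{lemma:MaxPartition}, and finiteness of $\Gamma_{\mathrm{ref}}(\mathcal{P}_\pm)$ follows. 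The only real work beyond bookkeeping is confirming finiteness of the coarsening collections, which reduces to the elementary fact that a finite set has finitely many partitions.
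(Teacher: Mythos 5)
Your proof is correct and takes essentially the same route as the paper's: both decompose $\Gamma_{\mathrm{ref}}(\mathcal P_\pm)$ as a finite union of the sets $\Gamma(\mathcal Q_\pm)$ over the finitely many pairs of partitions into closed balls that $\mathcal P_+$ and $\mathcal P_-$ refine, and then invoke Lemma~\ref{lemma:MaxPartition}. The only cosmetic difference is in justifying that there are finitely many such coarser partitions: the paper deduces this from the fact that a given closed ball is contained in only finitely many closed balls, while you count the finitely many ways of grouping the blocks of the finite partition $\mathcal P_\pm$ (and check that the relevant groupings yield closed balls).
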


\begin{proof}
Given any closed ball $B$ in  $X$, there exist only finitely many
distinct closed balls of $X$ containing $B$. Hence, any partition of $X$ into closed balls refines only finitely many other partitions of $X$ into
closed balls. Thus, there exist only finitely many pairs, say
$(\mathcal{P}_+^i,\mathcal{P}_-^i)$ for
$i=1,\dots,n$, of partitions of $X$ into closed balls such that 
$\mathcal{P}_+$ refines $\mathcal{P}_+^i$ and
$\mathcal{P}_-$ refines $\mathcal{P}_-^i$ for all $i=1,\dots,n$.
Clearly, 
$\Gamma_{\mathrm{ref}}(\mathcal P_\pm)=\bigcup_{i=1}^n\Gamma(\mathcal{P}_\pm^i)$
and the result follows from Lemma~\ref{lemma:MaxPartition}.
\end{proof}

\begin{lemma}
$\Gamma$ is countable.
\end{lemma}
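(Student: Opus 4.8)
The plan is to exhibit $\Gamma$ as a countable union of finite sets, with Lemma~\ref{lemma:MaxPartition} supplying the finite pieces. First I would observe that every $g\in\Gamma$ determines a pair $(\mathcal P_+^g,\mathcal P_-^g)$, where $\mathcal P_+^g$ is the maximum partition for $g$ and $\mathcal P_-^g$ is the maximum partition for $g^{-1}$; both are partitions of $X$ into closed balls, and by definition $g\in\Gamma(\mathcal P_\pm^g)$. Hence $\Gamma=\bigcup\Gamma(\mathcal P_\pm)$, the union being taken over all pairs $(\mathcal P_+,\mathcal P_-)$ of partitions of $X$ into closed balls, and each set in this union is finite by Lemma~\ref{lemma:MaxPartition}. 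It therefore suffices to show that there are only countably many such pairs, equivalently, only countably many partitions of $X$ into closed balls.

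The key step, which I expect to be the main obstacle, is to verify that $X$ has only countably many distinct closed balls; countability of the partitions then follows at once, since each partition is a finite set of closed balls (finite by compactness together with Lemma~\ref{lemma:OpenClosed}) and the collection of finite subsets of a countable set is countable. To count the closed balls, I would use two ultrametric facts. First, each nonempty closed ball $B$ satisfies $B=\bar B(x,\diam B)$ for every $x\in B$, so $B$ is recovered from any of its points and its diameter. Second, if $B\subsetneq B'$ are closed balls, then $\diam B<\diam B'$: choosing $y\in B'\setminus B$ and $x\in B$ gives $\diam B'\ge d(x,y)>\diam B$, the strict inequality holding because $y\notin\bar B(x,\diam B)=B$.

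Using these, for each $n\in\bn$ set $\mathcal Q_n=\{\bar B(x,1/n)\mid x\in X\}$, a partition of $X$ that is finite by compactness. I claim every closed ball lies in some $\mathcal Q_n$. The ball $X$ does, for $n$ small; and if $B\neq X$, let $B'$ be the smallest closed ball properly containing $B$, which exists because only finitely many closed balls contain $B$. By the strict inequality above, $\diam B<\diam B'$, so I may choose $n$ with $\diam B\le 1/n<\diam B'$. Then $B\subseteq\bar B(x,1/n)\subseteq B'$ for $x\in B$, while $\diam\bar B(x,1/n)\le 1/n<\diam B'$ forces $\bar B(x,1/n)\neq B'$; since the closed balls containing $B$ form a chain, $\bar B(x,1/n)=B\in\mathcal Q_n$. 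Thus the set of closed balls is contained in $\bigcup_n\mathcal Q_n$, a countable union of finite sets, and is countable. (Alternatively, this is immediate from the representation of $X$ as the end space of a locally finite rooted tree in Remark~\ref{remark:trees}.) Assembling the pieces, $\Gamma$ is a union of the finite sets $\Gamma(\mathcal P_\pm)$ over countably many index pairs, so $\Gamma$ is countable; once countability of the partitions is in hand, the conclusion is a direct application of Lemma~\ref{lemma:MaxPartition}.
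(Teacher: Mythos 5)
Your overall strategy coincides with the paper's: write $\Gamma=\bigcup\Gamma(\mathcal P_\pm)$ over all pairs of partitions of $X$ into closed balls, invoke Lemma~\ref{lemma:MaxPartition} for the finiteness of each piece, and reduce everything to the countability of the set of closed balls of $X$ --- a fact the paper simply asserts without proof. That reduction, and the two ultrametric facts you isolate ($B=\bar B(x,\diam B)$ for every $x\in B$, and $\diam B<\diam B'$ whenever $B\subsetneq B'$), are all correct.

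The one step that fails is the claim that every closed ball belongs to some $\mathcal Q_n=\{\bar B(x,1/n)\mid x\in X\}$. Your argument requires an integer $n$ with $1/n\in[\diam B,\diam B')$, and such an $n$ need not exist: a nondegenerate interval of admissible radii can miss every reciprocal integer. Concretely, take $X=\{a,b,c\}$ with $d(a,b)=0.3$ and $d(a,c)=d(b,c)=0.32$; this is a compact ultrametric space, and the closed ball $B=\{a,b\}$ satisfies $\bar B(a,r)=B$ exactly for $r\in[0.3,0.32)$, an interval containing no $1/n$, so $B\notin\bigcup_n\mathcal Q_n$. (A smaller instance of the same issue: if $\diam X>1$, then $X$ itself lies in no $\mathcal Q_n$.) The repair is easy and preserves your argument: index the families $\mathcal Q_q$ by all positive rationals $q$, since the nondegenerate interval $[\diam B,\diam B')$ always contains a rational; alternatively, observe that the set of values attained by $d$ is countable (for each $\epsilon>0$ only finitely many values exceed $\epsilon$, by compactness of $\{(x,y)\mid d(x,y)\ge\epsilon\}$ and the local constancy of $d$ off the diagonal), and parametrize balls by a center together with a value of the diameter. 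With any of these repairs --- or with your parenthetical appeal to Remark~\ref{remark:trees} --- the countability of the set of closed balls is secured, and the remainder of your argument is exactly the paper's.
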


\begin{proof}
$X$ has only countably many closed balls; hence, $X$ has only countably many
partitions into closed balls and only countably many pairs,
say $(\mathcal{P}_+^i, \mathcal{P}_-^i)$ for $i=1,2,3,\dots$, of partitions of $X$ into
closed balls. 
Clearly,
$\Gamma=\bigcup_{i=1}^\infty\Gamma(\mathcal{P}_\pm^i)$
and the result follows from Lemma~\ref{lemma:MaxPartition}.
\end{proof}

\section{Examples}
\label{section:examples}

In this section we give examples of locally finitely determined groups of local
similarities on compact ultrametric spaces.
The examples include Thompson's groups so that Farley's result \cite{Far}
is recovered from Theorem~\ref{thm:MainTheorem}.
The examples also include many other Thompson-like groups, as well as the full local similarity groups of end spaces of certain trees constructed by Bass and Kulkarni \cite{BaK}  and Bass and Tits \cite{BaT}.

We begin by recalling standard alphabet language and notation.
An excellent reference is Nekrashevych \cite{Nek}.
An {\it alphabet} is a non-empty finite set $A$ and finite (perhaps empty), ordered subsets
of $A$ are {\it words}. 
The set of all words is denoted $A^*$
and the set of {\it infinite words} is denoted
$A^\omega $; that is,
$$A^* = \coprod_{n=0}^\infty A^n ~~~\text{and}~~~ 
A^\omega = \prod_1^\infty A.$$
Let $T_A$ be the tree associated to $A$. The vertices of
$T_A$ are words in $A$; two words $v,w$ are connected by an edge
if and only if there exists $x\in A$ such that
$v=wx$ or $vx=w$. The root of $T_A$ is $\emptyset$.
Thus, $A^\omega = end(T_A,\emptyset)$ and so comes with a natural
ultrametric as described in Example~\ref{example:ends} making
$A^\omega$  compact.
We may assume that $A$ is totally ordered. There is then an induced
total order on $A^\omega$, namely  the lexigraphic order.

\begin{example}{\bf(The Higman--Thompson groups $G_{d,1}$)}\,
\label{example:lop}
Let $\Gamma= LS_{l.o.p}(A^\omega)$ be the subgroup of $LS(A^\omega)$ consisting of locally order
preserving local similarities on $A^\omega$, where
a map $h\co A^\omega\to A^\omega$ is {\it locally order preserving}
if for each $x\in A^\omega$ there exists $\epsilon >0$ such that
$h|\co B(x,\epsilon)\to A^\omega$ is order preserving.
We denote $\id_{A^\omega}= e$; it is the unique order preserving
isometry $A^\omega\to A^\omega$. Any closed ball in $A^\omega$ has a
unique order preserving similarity onto $A^\omega$; hence, 
if $B_1$ and $B_2$ are two closed balls in $A^\omega$, then there is
a unique order preserving similarity of $B_1$ onto $B_2$.
Let $\sm(B_1, B_2)$ consist solely of that unique order preserving similarity.
This can be described using alphabet language quite easily as follows.
A closed ball in $A^\omega$ is given by $vA^\omega$, where
$v\in A^*$. For 
$v,w\in A^*$,
$\sm(vA^\omega, wA^\omega)$ consists of the unique order
preserving similarity $vA^\omega\to wA^\omega$; $vx\mapsto wx$.
Clearly, this defines a finite similarity structure $\sm_{A^\omega}$ and
$\Gamma$ is locally determined by $\sm_{A^\omega}$. 

When the alphabet is $A=\{ 0,1\}$, we get 
Thompson's group $V = LS_{l.o.p}(A^\omega)$.
The subgroups
$F\leq T\leq V$ are also locally determined by the same finite similarity structure $\sm_{A^\omega}$ (elements of $T$ are further required to be cyclicly order preserving; those of $F$, to be order preserving).
In general, $LS_{l.o.p}(A^\omega)$ is the Higman--Thompson group
$G_{d,1}$, where $d=|A|$. For background on these groups, see
Cannon, Floyd, and Parry \cite{Can} and for other references,
see Hughes \cite[Section 12.3]{HugTUNG}.
\end{example}

\begin{example}{\bf (Generalized Higman--Thompson groups $LS_{l.o.p}(X)$)}\,
The previous example  can easily be extended to end spaces of rooted, ordered, proper  $\br$-trees $(T,v)$ so that the groups
$LS_{l.o.p}(X)$, where $X=end(T,v)$, defined in Hughes \cite[Section 12.3]{HugTUNG}, become locally finitely determined
groups of local similarities on $X$. 
In particular, it is easy to see that the Higman-Thompson groups $G_{d,n}$, $n\geq 1$, fit into this framework.
\end{example}

\begin{example}{\bf (Subgroups)}\,
A subgroup $H$ of a group $\Gamma$ of local similarities locally determined by the finite similarity structure $\sm$ is also locally determined by $\sm$. This is clear because Definition~\ref{definition:FinitelyDetermined} is a condition on elements of $\Gamma$,
which therefore holds for each element of $H$.
\end{example}

\begin{example}{\bf (Nekrashevych-R\"over groups $V_d(H)$, $H$ finite)}\,
Suppose $H$ is a finite, self-similar group over the alphabet $A$,
with $d=|A|$ (see Nekrashevych \cite{Nek}).
Nekrashevych \cite{NekJOT} defines a group $V_d(H)\leq LS(A^\omega)$
generalizing a construction of R\"over \cite{Rov99}.
To describe these groups note that there is a natural similarity from $A^\omega$ onto any closed ball of $A^\omega$; thus, any surjective similarity $h\co B_1\to B_2$ between closed balls gives rise to an isometry
$h_\ast$ of $A^\omega$:
$$h_\ast\co A^\omega \to B_1 \stackrel{h}{\rightarrow} B_2 \to A^\omega.$$
Then an element $g\in LS(A^\omega)$ is in $V_d(H)$ if and only if
for each $x\in A^\omega$ there exists $\epsilon, \lambda > 0$ such that
$g|:B(x,\epsilon)\to B(gx,\lambda\epsilon)$ is a $\lambda$-similarity
and $(g|)_\ast\in H$. 
For his general construction, Nekrashevych does not require $H$ to be finite,
but we require it in order to define the following finite similarity
structure on $A^\omega$:
if $B_1, B_2$ are closed balls of $A^\omega$, then $\sm(B_1, B_2)$ consists of all surjective similarities $h\co B_1\to B_2$ such that 
$h_\ast\in H$.
The Restrictions Property follows from 
the self-similarity property of $H$.
Clearly, $V_d(H)$ is finitely determined by $\sm_{A^\omega}$.

For example,
note that in the special case $H=\{ 1\}$, $V_d(H) = G_{d,1}$.

For a nontrivial example, let $\Sigma_d$ be the symmetric group on $A$. The action of $\Sigma_d$ on $A^\ast$ given by 
$\sigma(a_1\dots a_n)=\sigma(a_1)\dots\sigma(a_n)$ induces an action of $\Sigma_d$ on the tree $T_A$
and we let $H\cong\Sigma_d$ be the corresponding self-similar subgroup of $\Aut(T_A)$.
Note that $G_{d,1}\leq V_d(\Sigma_d)$ and that $\Gamma:= V_d(\Sigma_d)\cap\Aut(T_A)$ is a contracting self-similar
group with nucleus $\Sigma_d$ (see Nekrashevych \cite[Section 2.11]{Nek} for the definitions).

Generalizing this last observation, let $\Gamma$ be any contracting self-similar subgroup of $\Aut(T_A)$ whose nucleus $\mathcal{N}$
is a finite group (in general, contracting self-similar groups have nuclei that are finite sets---the condition that the nucleus be
a group is quite restrictive). It follows that $\mathcal{N}$ is a finite self-similar group and we can form the locally finitely determined
group $V_d(\mathcal{N})$. For each pair $B_1, B_2$ of closed balls in $A^\omega$, $\sm(B_1,B_2)$ is naturally identified with $\mathcal{N}$.
Note that $\Gamma\leq V_d(\mathcal{N})\cap\Aut(T_A)$.
\end{example}

\begin{example}{\bf (Groups acting on trees with finite vertex stabilizers)}\, 
Let $(T,v)$ be a geodesically complete, rooted, locally finite simplicial tree, where {\it geodesically complete} means no vertex, except possibly the root, has valency $1$. Let $\Gamma$ be a subgroup
of the isometry group $Isom(T)$ such that $\Gamma$ has finite vertex stabilizers (that is, for each vertex $w\in T$, the isotropy group 
$\Gamma_w$ is finite). There is a well-known homomorphism
$\epsilon\co Isom(T)\to LS(X)$, where $X=end(T,v)$, 
explicitly described in Hughes \cite[Section 12.1]{HugTUNG}.
We will show that $\epsilon(\Gamma)$ is locally finitely determined. 
If $B$ is a closed ball in $X$, then there exists a vertex 
$w_B\in T$ such that 
$B=\{ x\in X ~|~ x(d(v,w_B))=w_B\}$ and
$T_B =\{ x(t) ~|~ x\in B \text{ and } t\geq d(v,w_B)\}$ is a subtree
of $T$ with $B$ similar to $end(T_B,w_B)$.
Define a finite similarity structure $\sm$ as follows.
If $B_1, B_2$ are closed balls in $X$, let
$$\sm(B_1, B_2) =\{ \epsilon(g)|\co B_1\to B_2 ~|~
g\in \Gamma,~ g(w_{B_1})=w_{B_2}, \text{ and }
g(T_{B_1}) =T_{B_2}\}.$$
The finite vertex stabilizers assumption implies that $\sm(B_1,B_2)$ is finite. The other properties of a similarity structure are easy to verify.
Moreover, $\epsilon(\Gamma)$ is locally determined 
by $\sm$.
Note that $\epsilon$ is an injection except when $T$ is isometric to $\br$.
In particular, finitely generated free groups are locally finitely determined.
Of course, it is well-known that discrete groups acting on trees with
finite vertex stabilizers have the Haagerup property (see Cherix et al.\ \cite[Section 1.2.3]{CCJJV}).
\end{example}

\begin{example}{\bf (Local similarity groups of locally rigid, compact ultrametric spaces)}\,
Let $X$ be a locally rigid, compact ultrametric space as defined in
Hughes \cite{HugTUNG}. 
It is proved there that a compact ultrametric space $X$ is locally rigid if and only if the isometry group $Isom(X)$ is finite.
In particular, the isometry group of any closed ball in $X$ is also finite. 
From this it follows easily that for any two closed balls $B_1, B_2$ in $X$, the set of all surjective similarities
from $B_1$ to $B_2$ is finite. We can therefore define a finite similarity structure $\sm$ by letting 
$\sm(B_1,B_2)$ be the set of all  similarities from $B_1$ onto $B_2$. Then the group $\Gamma= LS(X)$ of all local similarities of $X$
onto itself is locally determined by $\sm$. 
\end{example}

\begin{example}{\bf (Local similarity groups of end spaces of rigid trees)}\, 
Let $T$ be a locally finite simplicial tree that is {\it rigid}; that is, the group of automorphisms 
$\Aut(T)$ is discrete; see Bass and Lubotzky \cite{BaL}. 
Let $X=end(T,v)$, where $v$ is a chosen vertex of $T$. Assuming that $(T,v)$ is geodesically complete,
the rigidity of $T$ is equivalent to local rigidity of
$X$; see Hughes \cite[Section 12.2]{HugTUNG}. Hence, $\Gamma := LS(X)$ is locally finitely determined as described in the preceding example.
An interesting source of examples of rigid trees come from {\it $\pi$-rigid} graphs of Bass and Kulkarni \cite{BaK} and
Bass and Tits \cite{BaT}.
These are finite, connected, simplicial graphs $G$ with the property that if $\tilde{G}$ is the universal covering tree of $G$, then
$\Aut(\tilde{G}) =\pi_1(G)$. In particular, $\tilde{G}$ is rigid
and $LS(end(\tilde{G},v))$ is finitely determined.
\end{example}


\section{Zipper actions}
\label{section:zipper}

In this section we discuss a sufficient condition, called a zipper action, for a discrete group to have the Haagerup property.
This condition is implicit in Farley \cite{Far} and is a special
case of the necessary and sufficient condition due to
Valette \cite[Proposition 7.5.1]{CCJJV}. 
Moreover, in the appendix to this paper, Farley shows that zipper actions are equivalent to proper actions on spaces of walls. 
Apart from the terminology, there is nothing original in this section.

\begin{definition}
\label{def:ZipperAction}
A discrete group $\Gamma$ has a {\it zipper action}
if 
there is a left action 
$\Gamma\curvearrowright\mathcal E$
of  $\Gamma$ on a set $\mathcal E$ and
a subset $Z\subseteq\mathcal E$ such that
\begin{enumerate}
\item for every $g\in\Gamma$, the symmetric difference 
$gZ \,\triangle\, Z$ is finite, and
\item for every $r>0$, $\{ g\in\Gamma ~|~ |gZ\,\triangle\, Z|\leq
r\}$ is finite.
\end{enumerate}
\end{definition}

Note that if $\Gamma$ is an infinite group then
condition 2 implies $Z$ must also be infinite.
Also, the action $\Gamma\curvearrowright\mathcal E$ is not assumed to be effective; however, condition 2 implies that the kernel of the action is 
finite.

The terminology arises as follows. We think of $Z$ as being an infinite
zipper in $\mathcal E$ that is unzipped by the action of $\Gamma$.
Only a finite portion is unzipped by any finite subset of $\Gamma$, but as one takes larger finite subsets of $\Gamma$, more of $Z$ is unzipped.

\begin{example}
We show that the group $\bz$ has a zipper action.
Let 
$${\mathcal E} = \bz \hbox{ ~and~ } Z=\{ n\in{\mathcal E} ~|~ n\leq 0\}.$$
An action $\bz\curvearrowright\mathcal E$ is defined by 
$g\cdot n = g+n$.
If $g\in\bz$ and $g\geq 0$, then $Z\subseteq gZ$ and $gZ \,\triangle\, Z = \{ n\in\mathcal E ~|~ 0 < n\leq g\}$.
If $g\in\bz$ and $g\leq 0$, then $gZ\subseteq Z$ and 
$gZ \,\triangle\, Z = \{ n\in\mathcal E ~|~ g < n\leq 0\}$.
One may say ``$Z$ is taken further off itself as $g\to+\infty$ in $\bz$'' and ``$Z$ is taken deeper into itself as $g\to-\infty$ in $\bz$.''
Thus, $\vert gZ \,\triangle\, Z\vert =\vert g\vert$ for all $g\in \bz$.
If $r\geq 0$, then 
$\{ g\in\bz ~|~ \vert gZ \,\triangle\, Z\vert \leq r\}= \{ g\in\bz ~|~ \vert g\vert\leq r\}$, which is finite.
\end{example}

The  proof of the following theorem, which is a special case of 
Valette \cite[Proposition 7.5.1]{CCJJV}, is implicit in Farley \cite{Far}, but is included for
completeness.

\begin{theorem}
\label{thm:HPcriterion}
If the discrete group $\Gamma$ has a zipper action,
then $\Gamma$ has the Haagerup property.
\end{theorem}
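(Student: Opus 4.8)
The plan is to construct an explicit isometric, metrically proper action of $\Gamma$ on an affine Hilbert space directly from the zipper data, using the symmetric-difference function as the source of a conditionally negative definite kernel. Given the zipper action $\Gamma\curvearrowright\mathcal{E}$ with distinguished set $Z\subseteq\mathcal{E}$, the natural Hilbert space to use is $\mathcal{H}=\ell^2(\mathcal{E})$, the real Hilbert space of square-summable functions on $\mathcal{E}$, on which $\Gamma$ acts by the permutation (regular) representation $\pi$, that is, $(\pi(g)\xi)(x)=\xi(g^{-1}x)$. This is an orthogonal (hence isometric linear) action. The key object is the characteristic function $\chi_Z\in\{0,1\}^{\mathcal{E}}$, which is generally not in $\ell^2(\mathcal E)$ since $Z$ is infinite, but whose translates differ from it by $\ell^2$ vectors.

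First I would define the map $b\co\Gamma\to\ell^2(\mathcal{E})$ by $b(g)=\chi_{gZ}-\chi_Z$. Condition (1) of the zipper action guarantees $gZ\symdiff Z$ is finite, so $b(g)$ is finitely supported and thus genuinely lies in $\ell^2(\mathcal{E})$; moreover $\|b(g)\|^2=|gZ\symdiff Z|$, since $\chi_{gZ}-\chi_Z$ takes values in $\{-1,0,1\}$ and is nonzero exactly on $gZ\symdiff Z$. Next I would verify the cocycle identity $b(gh)=b(g)+\pi(g)b(h)$: this follows from the computation $\chi_{ghZ}-\chi_Z=(\chi_{gZ}-\chi_Z)+(\chi_{ghZ}-\chi_{gZ})$ together with the observation that $\pi(g)\chi_{hZ}=\chi_{ghZ}$ and $\pi(g)\chi_Z=\chi_{gZ}$, so that $\pi(g)b(h)=\chi_{ghZ}-\chi_{gZ}$. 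Having a $1$-cocycle $b$ for the orthogonal representation $\pi$ yields an affine isometric action of $\Gamma$ on $\mathcal{H}$ given by $g\cdot\xi=\pi(g)\xi+b(g)$; this is the standard dictionary between affine isometric actions and cocycles.

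The remaining point is metric properness, and this is where I would focus the real argument. The orbit map sends $g$ to $b(g)$ (the image of the origin), and an affine isometric action is metrically proper precisely when $\|b(g)\|\to\infty$ as $g\to\infty$ in $\Gamma$, equivalently when $\{g\in\Gamma\mid\|b(g)\|\leq R\}$ is finite for every $R$. But $\|b(g)\|^2=|gZ\symdiff Z|$, so $\{g\mid\|b(g)\|^2\leq r\}=\{g\mid|gZ\symdiff Z|\leq r\}$, which is exactly the set that condition (2) of the zipper action declares to be finite. Thus properness is an immediate translation of condition (2), and conditions (1) and (2) of Definition~\ref{def:ZipperAction} correspond exactly to the two requirements (being well-defined into $\ell^2$, and being proper) for the affine action.

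I expect the only genuine subtlety—rather than an obstacle—to be bookkeeping the set-theoretic identities among characteristic functions of translated sets, particularly checking the cocycle relation and confirming that all the vectors $b(g)$ are finitely supported so that the infinite set $Z$ never causes a convergence problem. Once the cocycle identity and the norm formula $\|b(g)\|^2=|gZ\symdiff Z|$ are in place, the verification of the Haagerup property is essentially automatic from the definitions, since nothing more than the finiteness conditions (1) and (2) is needed.
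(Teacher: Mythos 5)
Your proposal is correct and follows essentially the same route as the paper: the paper likewise defines the cocycle $g\mapsto \chi_{gZ}-\chi_Z$ for the permutation representation on $\ell^2(\mathcal E)$, notes that its support is $gZ\,\triangle\,Z$ so that it is finitely supported with squared norm $|gZ\,\triangle\,Z|$, and derives metric properness of the resulting affine isometric action directly from condition (2) of the zipper action. There is no substantive difference between the two arguments.
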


\begin{proof}
Define $\pi:\Gamma\to\ell^\infty(\mathcal E)$ by
$\pi(g)= \chi_{gZ}-\chi_Z$ (where $\chi_Y$ denotes the characteristic
function of $Y\subseteq\mathcal E$).
Note:
\begin{enumerate}
\item The support of $\pi(g)$ is $gZ\,\triangle\,Z$; hence,
$\pi(g)$ is finitely supported and $\pi(g)$ is in the Hilbert space
$\ell^2(\mathcal E)$
for all $g\in\Gamma$.
\item The square of the $\ell^2$-norm $\Vert\pi(g)\Vert_2^2 
= |gZ\,\triangle\,Z|$
for all $g\in\Gamma$.
\item
\label{finite}
For every $r>0$, $\{g\in\Gamma ~|~ \Vert\pi(g)\Vert_2\leq r
\}$ is finite.
\end{enumerate}

The action of $\Gamma$ on $\mathcal E$ induces a unitary
left action of $\Gamma$ on $\ell^2(\mathcal E)$,
$\rho:\Gamma\to\mathcal B(\ell^2(\mathcal E))$, where
$\mathcal B(\ell^2(\mathcal E))$ is the space of bounded linear operators
on $\ell^2(\mathcal E)$. Namely,
$\rho(g)(f)(e) = f(g^{-1}e)$ for $g\in\Gamma$, $f\co\mathcal{E}\to\bc$ in
$\ell^2(\mathcal E)$, and $e\in\mathcal{E}$.

One checks that $\pi$ is a $1$-cocycle for $\rho$; that is,
$\pi(g_1g_2)=\rho(g_1)\pi(g_2)+\pi(g_1)$ for all $g_1, g_2\in\Gamma$.
For this, it is useful to observe that
$g\chi_Y=\chi_{gY}$ in $\ell^\infty(\mathcal E)$ for any
$Y\subseteq\mathcal E$.
It follows that $A:\Gamma\to Isom(\ell^2(\mathcal E))$ defined
by $A(g)(f) = \rho(g)(f) +\pi(g)$ is an affine isometric action
of $\Gamma$ on $\ell^2(\mathcal E)$.
Moreover, property \ref{finite} above guarantees that $A$ is
metrically proper.
\end{proof}


\begin{remark} The existence of a zipper action is preserved by direct sums of groups. 
For let $\Gamma_i$ ($i=1,2$)  be discrete groups having  left actions
$\Gamma_i\curvearrowright\mathcal E_i$ and subsets $Z_i\subset \mathcal E_i$ as in Definition~\ref{def:ZipperAction}.
Let $\Gamma:=\Gamma_1\oplus\Gamma_2$, $\mathcal E:=\mathcal E_1\amalg \mathcal E_2$, $Z:= Z_1\amalg Z_2$, and define a left action
$\Gamma\curvearrowright\mathcal E$ in the obvious way: 
$(g_1,g_2)\cdot e_i = g_ie_i$ where $e_i\in\mathcal E_i$ and $i\in\{ 1,2\}$.
The conditions are readily checked.
\end{remark}

\section{The main construction}
\label{sec:the main construction}

In this section we prove the following theorem.

\begin{theorem}
\label{thm:ZipperExistence} 
If $\Gamma$ is a locally finitely determined group
of local similarities on a compact ultrametric space $X$, then $\Gamma$ has a zipper
action.
\end{theorem}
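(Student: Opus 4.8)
The plan is to have $\Gamma$ act on a countable set built from the closed balls of $X$, and to extract the zipper subset from the maximum partitions. Let $\mathcal B$ denote the (countable) set of all closed balls of $X$. For $g\in\Gamma$, call a ball \emph{small for $g$} if it is contained in a maximum region of $g$, and write $A_g$ for the set of balls that strictly contain some maximum region of $g$. Since the maximum partition $\mathcal P_+$ of $g$ is finite (by compactness) and each ball has only finitely many balls containing it, $A_g$ is finite. If $B$ is small for $g$, then $g|B\in\sm(B,g(B))$ by the Restrictions property, so $g(B)\in\mathcal B$; hence $g$ restricts to a bijection $\theta_g\colon\mathcal B\setminus A_g\to\mathcal B\setminus A_{g^{-1}}$ between cofinite subsets of $\mathcal B$. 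Using the Compositions and Restrictions properties, one checks the partial-cocycle identity $\theta_{gh}=\theta_g\circ\theta_h$ wherever both sides are defined, so $g\mapsto\theta_g$ is a \emph{partial action} of $\Gamma$ on $\mathcal B$.

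First I would globalize this partial action to a genuine left action $\Gamma\curvearrowright\mathcal E$ on the quotient $\mathcal E=(\Gamma\times\mathcal B)/\!\sim$, where $[g,B]=[g',B']$ whenever $B'=\theta_{g'^{-1}g}(B)$ is defined, with $\Gamma$ acting by $k\cdot[g,B]=[kg,B]$. The map $B\mapsto[e,B]$ embeds $\mathcal B$ into $\mathcal E$, and I take $Z$ to be its (infinite) image. The points $[g,B]$ with $B\in A_g$ are not of the form $[e,C]$, precisely because $\theta_g$ is undefined there; these new points are the mechanism by which the action records the similarity structure $\sm$ rather than merely the topology of $X$.

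Next I would read off the symmetric difference directly from these definitions: $gZ\setminus Z=\{[g,B]:B\in A_g\}$ and $Z\setminus gZ=\{[e,C]:C\in A_{g^{-1}}\}$, so that $|gZ\,\triangle\,Z|=|A_g|+|A_{g^{-1}}|$, which is finite; this verifies condition~1 of Definition~\ref{def:ZipperAction}. For condition~2, a bound $|gZ\,\triangle\,Z|\le r$ forces $|A_g|\le r$ and $|A_{g^{-1}}|\le r$, whence every maximum region of $g$ and of $g^{-1}$ has depth at most $r$ in the tree of balls (a region of depth $d$ contributes its $d$ strict ancestors to $A_g$). Consequently both $\mathcal P_+$ and $\mathcal P_-$ for $g$ are refined by a single fixed finite partition $\mathcal Q_r$ of $X$ into balls depending only on $r$, so $g\in\Gamma_{\mathrm{ref}}(\mathcal Q_r,\mathcal Q_r)$, and Lemma~\ref{lemma:RefineMax} gives finiteness of $\{g:|gZ\,\triangle\,Z|\le r\}$. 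This establishes the zipper action and hence the theorem.

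I expect the main obstacle to be the construction of a genuine $\Gamma$-action that is sensitive to $\sm$ and not merely to the topology of $X$. A naive action on balls or on clopen sets fails: an element permuting the subtree below a deep ball (for instance, one exchanging two sub-balls) preserves the topological ball structure and would have empty symmetric difference, defeating properness. The globalization of the partial action is exactly what repairs this, and the tension between the finiteness of $A_g$ (condition~1) and the depth bound feeding Lemma~\ref{lemma:RefineMax} (condition~2) is the technical heart of the argument; verifying the partial-cocycle identity carefully enough to justify the globalization is where I would concentrate the detailed work.
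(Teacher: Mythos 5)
Your proposal is correct and takes essentially the same route as the paper: the globalized partial action on $(\Gamma\times\mathcal B)/\!\sim$ is canonically the paper's $\Gamma$-set of classes $[f,B]$ of $\sm$-determined local similarity embeddings (restricted to the orbit of $Z$), your count $|gZ\,\triangle\,Z|=|A_g|+|A_{g^{-1}}|$ is exactly Lemma~\ref{lemma:bijection}, and your reduction of properness to Lemma~\ref{lemma:RefineMax} via a finite partition refining all the relevant maximum partitions mirrors the paper's construction of $\mathcal P_r$. The only differences are cosmetic: the packaging as a globalized partial action, and the use of a depth-$r$ partition $\mathcal Q_r$ depending only on $r$ in place of the paper's partition built from minimal maximum regions.
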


Clearly, Theorem~\ref{thm:MainTheorem} follows from 
Theorems~\ref{thm:HPcriterion} and \ref{thm:ZipperExistence}.

Throughout this section, $X$ will 
denote a compact 
ultrametric space and
$\Gamma\leq LS(X)$ will be a group locally determined by a
finite similarity structure $\sm$ on $X$.

Before defining a set $\mathcal{E}$ with a zipper action
$\Gamma\curvearrowright\mathcal E$,
note that it follows from Lemma~\ref{lemma:OpenClosed} that the image of a local similarity embedding $f\co B\to X$, where $B$ is a closed ball in $X$, is a finite union of mutually disjoint
closed balls in $X$.

Now let $\mathcal E$ be the set of equivalence classes of pairs $(f,B)$
where $B$ is a closed ball in $X$ and $f\co B\to X$ is a local similarity
embedding locally determined by $\sm$.
Two such $(f_1, B_1)$ and $(f_2, B_2)$ are {\it equivalent}
provided there exists $h\in\sm(B_1, B_2)$ such
that $f_2h=f_1$ (in particular,
$f_1(B_1)=f_2(B_2)$). The verification that this is an equivalence
relation requires the Identities, Compositions, and Inverses
Properties of the similarity structure.
Equivalence classes are denoted by $[f,B]$.

Let $Z = \{[f,B]\in\mathcal E ~|~ f(B)$ ~\text{is a closed ball in $X$
and $f\in\sm(B, f(B))$}\}.

Note that an element $[f,B]\in Z$ is uniquely determined
by the closed ball $f(B)$. In fact, $[f,B] = [\incl_{f(B)},f(B)]$,
where $\incl_Y\co Y\to X$ denotes the inclusion map.
Thus,
$$Z = \{[\incl_B, B]\in{\mathcal E} ~|~ B ~\text{is a closed ball in $X$}\}.$$
In particular, $Z$ can be identified with the collection of all closed balls in $X$.

There is a left action 
$\Gamma\curvearrowright\mathcal E$ defined by
$g[f,B] = [gf,B]$. The fact that $[gf,B]\in\mathcal{E}$ follows from the
Compositions and Restrictions Properties of the similarity structure. 

It follows from the description of $Z$ above that for all $g\in \Gamma$,
$$gZ = \{ [g|_B,B]\in{\mathcal E} ~|~ B ~\text{is a closed ball of $X$}\}.$$

The next part of this section is devoted to establishing,
in Corollary~\ref{cor:ZipperOne}
and Lemma~\ref{lemma:ZipperTwo} below, the two properties 
required of a zipper action.

\begin{lemma} Let $B$ be a closed ball in $X$ and $g\in\Gamma$.
Then $[\incl_B,B]\in Z\setminus gZ$ if and only if $B$ properly
contains a maximum region of $g^{-1}$.
\end{lemma}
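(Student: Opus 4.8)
The plan is to convert the set-theoretic condition $[\incl_B,B]\in Z\setminus gZ$ into a statement purely about regions of $g^{-1}$, and then apply the structure theory of maximum regions established earlier. Since $[\incl_B,B]$ lies in $Z$ for \emph{every} closed ball $B$, the only real content is to decide when $[\incl_B,B]\notin gZ$. So the two steps are: (i) show that $[\incl_B,B]\in gZ$ if and only if $B$ is a region for $g^{-1}$; and (ii) show that $B$ is a region for $g^{-1}$ if and only if $B$ does not properly contain a maximum region of $g^{-1}$.

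For step (i) I would unwind the description $gZ=\{[g|_C,C]\mid C \text{ a closed ball in } X\}$. By definition of the equivalence relation on $\mathcal{E}$, the identity $[\incl_B,B]=[g|_C,C]$ holds exactly when there is some $h\in\sm(B,C)$ with $(g|_C)\circ h=\incl_B$. Reading this off pointwise shows $g(h(x))=x$ for all $x\in B$, so $h$ is a bijection $B\to C$ and $g|_C=h^{-1}$; by the Inverses Property $g|_C=h^{-1}\in\sm(C,B)$ and $g(C)=B$, i.e.\ $C$ is a region for $g$ with image $B$. Conversely, if $C$ is a region for $g$ with $g(C)=B$, then $h:=(g|_C)^{-1}\in\sm(B,C)$ (again Inverses) witnesses $[\incl_B,B]=[g|_C,C]\in gZ$. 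Finally, by the lemma that $C$ is a region for $g$ precisely when $g(C)$ is a region for $g^{-1}$ (applied with $C=g^{-1}(B)$ in one direction), the existence of such a $C$ is equivalent to $B$ itself being a region for $g^{-1}$.

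For step (ii) I would use the fact, recorded just after the maximum-partition lemma, that any closed ball in $X$ contains, or is contained in, a maximum region of $g^{-1}$. If $B$ is contained in a maximum region $M$ of $g^{-1}$, then since $g^{-1}|M\in\sm(M,g^{-1}(M))$, the Restrictions Property gives $g^{-1}|B\in\sm(B,g^{-1}(B))$, so $B$ is a region for $g^{-1}$. Conversely, if $B$ properly contains a maximum region $M$ of $g^{-1}$, then $B$ cannot be a region for $g^{-1}$: otherwise $M\subsetneq B$ would exhibit the maximum region $M$ as properly contained in a region, contradicting maximality. Hence $B$ is a region for $g^{-1}$ if and only if $B$ does not properly contain a maximum region of $g^{-1}$. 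Combining (i) and (ii), and recalling $[\incl_B,B]\in Z$ always, we get $[\incl_B,B]\in Z\setminus gZ$ iff $B$ is not a region for $g^{-1}$ iff $B$ properly contains a maximum region of $g^{-1}$, as claimed.

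The main obstacle is step (i): one must be careful unwinding the equivalence relation defining $\mathcal{E}$ and keep straight which of the two maps is the inclusion, since this is exactly where an inverse is forced to appear and where the Inverses Property is genuinely used. Step (ii) is easier, but it relies essentially on the dichotomy \emph{``contains or is contained in a maximum region,''} which must not be confused with containment of an arbitrary region; the maximality argument is precisely what rules out $B$ being a region once it properly swallows a maximum region.
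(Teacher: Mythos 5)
Your proof is correct and uses the same ingredients as the paper's (unwinding the equivalence relation via the Inverses and Restrictions Properties, plus the dichotomy and maximality of maximum regions); the only difference is organizational, in that you factor the argument through the explicit intermediate equivalence ``$[\incl_B,B]\in gZ$ iff $B$ is a region for $g^{-1}$,'' whereas the paper runs two direct contradiction arguments. This is essentially the same approach.
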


\begin{proof}
Suppose first that $[\incl_B,B]\in Z\setminus gZ$ and, by way of
contradiction, there exists a maximum region $R$ for $g^{-1}$
containing $B$. Then $g^{-1}R$ is a ball and $g^{-1}|R\in
\sm(R,g^{-1}R)$. The Restrictions Property implies 
$g^{-1}|B\in\sm(B,g^{-1}B)$ and
$[g^{-1}|B,B]\in Z$. Clearly,
$[\incl_B,B]=g[g^{-1}|B,B]\in gZ$, which is a contradiction.

Conversely, let $R$ be a maximum region of $g^{-1}$ properly
contained in $B$. If $[\incl_B,B]\in gZ$, then 
there exists $[\incl_{B_1}, B_1]\in Z$ such that 
$[g|B_1,B_1]=g[\incl_{B_1},B_1]=[\incl_B,B]$, which is to say
$g(B_1)=B$. 
Moreover,
$[g|B_1, B_1] = [\incl_B,B]$ implies that $g|\co B_1\to B$ is in 
$\sm(B_1,B)$.
The Inverses Property implies $g^{-1}|\co B\to B_1$ is in $\sm(B,B_1)$.
In particular, $B$ is  a region for $g^{-1}$, contradicting the maximality of $R$. Thus, $[\incl_B,B]\notin gZ$.
\end{proof}

\begin{lemma}
\label{lemma:bijection} For each $g\in \Gamma$, 
the function $[\incl_B,B]\mapsto B$ is a bijection
from $Z\setminus gZ$ to the set of closed balls of $X$ properly
containing maximum regions of $g^{-1}$.
Moreover, the function
$g[\incl_B,B]\mapsto B$ is a bijection
from $gZ\setminus Z$ to the set of closed balls of $X$ properly
containing maximum regions of $g$.
\end{lemma}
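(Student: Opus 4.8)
The plan is to prove the two halves of Lemma~\ref{lemma:bijection} in a parallel fashion, using the immediately preceding lemma for the first half and applying that lemma to $g^{-1}$ for the second. For the first statement, the immediately preceding lemma already tells us that $[\incl_B,B]\in Z\setminus gZ$ \emph{if and only if} $B$ properly contains a maximum region of $g^{-1}$. So the map $[\incl_B,B]\mapsto B$ genuinely lands in the asserted target set, and conversely every closed ball in that target set is hit. What remains is to check that this map is well-defined and a bijection.

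The well-definedness and injectivity are really the heart of the matter, and both rest on the observation (made just before the definition of $Z$ in the excerpt) that an element $[\incl_B,B]\in Z$ is \emph{uniquely} determined by the closed ball $B$: indeed $Z$ is in canonical bijection with the collection of all closed balls of $X$ via $[\incl_B,B]\leftrightarrow B$. So first I would note that the assignment $[\incl_B,B]\mapsto B$ is simply the restriction to $Z\setminus gZ$ of the already-established bijection $Z\to\{\text{closed balls of }X\}$, and hence is automatically well-defined and injective. Surjectivity onto the target set is then exactly the content of the preceding lemma: given a closed ball $B$ properly containing a maximum region of $g^{-1}$, the preceding lemma guarantees $[\incl_B,B]\in Z\setminus gZ$, and this element maps to $B$.

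For the second statement, I would apply the first statement (equivalently, the preceding lemma) to the element $g^{-1}$ in place of $g$. The first part, applied to $g^{-1}$, gives a bijection $[\incl_B,B]\mapsto B$ from $Z\setminus g^{-1}Z$ onto the closed balls properly containing maximum regions of $(g^{-1})^{-1}=g$. Now I would translate this back using the action: applying $g$ to the whole picture, $g$ carries $Z\setminus g^{-1}Z$ bijectively onto $gZ\setminus Z$, and under this translation the element $[\incl_B,B]$ becomes $g[\incl_B,B]$. Composing, the map $g[\incl_B,B]\mapsto B$ is the desired bijection from $gZ\setminus Z$ onto the closed balls properly containing maximum regions of $g$. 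The one point requiring care is that $g$ acts as a genuine bijection between $Z\setminus g^{-1}Z$ and $gZ\setminus Z$; this is immediate since $g$ acts bijectively on all of $\mathcal E$ and $g(Z\setminus g^{-1}Z)=gZ\setminus Z$.

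The main obstacle, such as it is, is purely bookkeeping: making sure that the representatives $[\incl_B,B]$ and $g[\incl_B,B]$ are unambiguous. Because different closed balls can be equivalent as abstract pairs $(f,B)$ only through a similarity in $\sm(B_1,B_2)$, one must confirm that no two \emph{distinct} closed balls $B\neq B'$ yield $[\incl_B,B]=[\incl_{B'},B']$; this is precisely the earlier remark that inclusions are determined by their domains, so an equivalence $\incl_{B'}\,h=\incl_B$ with $h\in\sm(B,B')$ forces $h$ to be inclusion-like and $B=B'$. Once that uniqueness is invoked, the bijection claims follow formally from the preceding lemma with no further computation.
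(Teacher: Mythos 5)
Your proposal is correct and follows essentially the same route as the paper: the first statement is read off from the preceding lemma together with the already-noted identification of $Z$ with the set of closed balls, and the second is obtained by applying the first to $g^{-1}$ and transporting via the bijection $gZ\setminus Z\to Z\setminus g^{-1}Z$ given by the action of $g$. The extra care you take with well-definedness is already implicit in the paper's remark that $[\incl_B,B]$ is uniquely determined by $B$.
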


\begin{proof}
The first statement follows immediately from the preceding
lemma. The second follows from the first together with the
observation that $g[\incl_B,B]\mapsto [\incl_B,B]$ is a bijection
from $gZ\setminus Z$ to $Z\setminus g^{-1}Z$.
\end{proof}

\begin{corollary}
\label{cor:ZipperOne}
For each $g\in\Gamma$, the symmetric difference
$gZ\,\triangle\, Z$ is finite.
\end{corollary}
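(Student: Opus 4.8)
The plan is to reduce the statement to the finiteness of two explicit collections of closed balls via the bijections already furnished by Lemma~\ref{lemma:bijection}, and then to bound those collections using compactness of $X$ together with the structure of closed balls in an ultrametric space.

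First I would write $gZ\,\triangle\, Z = (gZ\setminus Z)\cup(Z\setminus gZ)$, so that it suffices to prove each of the two pieces is finite. By Lemma~\ref{lemma:bijection}, the map $[\incl_B,B]\mapsto B$ identifies $Z\setminus gZ$ with the set of closed balls of $X$ that properly contain some maximum region of $g^{-1}$, and $g[\incl_B,B]\mapsto B$ identifies $gZ\setminus Z$ with the set of closed balls of $X$ that properly contain some maximum region of $g$. Thus it is enough to show that, for any $h\in\Gamma$ (which I will apply to $h=g$ and to $h=g^{-1}$), the collection of closed balls properly containing a maximum region of $h$ is finite.

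The key observation, and the one genuinely non-bookkeeping step, is that the maximum partition for $h$ is finite. Indeed, the maximum regions of $h$ are mutually disjoint closed balls covering $X$, and in an ultrametric space closed balls are open. Hence the maximum partition is a cover of the compact space $X$ by pairwise disjoint open sets; by compactness a finite subcollection already covers $X$, and since the regions are pairwise disjoint and nonempty this forces every region to lie in that subcollection, so the partition is finite. Write the maximum regions of $h$ as $R_1,\dots,R_k$.

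Finally, for each fixed $R_i$ there are only finitely many closed balls of $X$ containing $R_i$ --- this is the same elementary fact about compact ultrametric spaces invoked at the start of the proof of Lemma~\ref{lemma:RefineMax}. Therefore the set of closed balls properly containing some maximum region of $h$ is contained in the finite union over $i=1,\dots,k$ of the (finite) sets of closed balls $B$ with $R_i\subsetneq B$, and so is finite. Applying this to both $h=g$ and $h=g^{-1}$ shows that $gZ\setminus Z$ and $Z\setminus gZ$ are each finite, whence $gZ\,\triangle\, Z$ is finite. The only point requiring genuine input is the finiteness of the maximum partition; once that is in hand, everything else is bookkeeping through the bijections of Lemma~\ref{lemma:bijection}.
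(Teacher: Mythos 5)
Your proposal is correct and follows the paper's own route: the paper likewise deduces the corollary immediately from Lemma~\ref{lemma:bijection} together with the fact that only finitely many closed balls contain a maximum region of $g$ or $g^{-1}$. You have merely made explicit the two facts the paper leaves implicit (finiteness of the maximum partition via compactness, and finiteness of the set of closed balls containing a given ball), both of which are already used elsewhere in the paper.
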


\begin{proof}
This follows immediately from the preceding lemma because there
are only a finite number of closed balls of $X$ containing a maximum
region of $g$ or $g^{-1}$.
\end{proof}

\begin{lemma}
\label{lemma:ZipperTwo}
For each $r>0$, $\{ g\in\Gamma ~|~ |gZ\,\triangle\, Z|\leq
r\}$ is finite.
\end{lemma}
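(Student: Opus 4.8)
The plan is to show that the condition $|gZ \,\triangle\, Z| \leq r$ forces the maximum partitions of $g$ and $g^{-1}$ to be ``bounded in complexity,'' and then to invoke Lemma~\ref{lemma:RefineMax} to get finiteness. By Lemma~\ref{lemma:bijection}, $|gZ \,\triangle\, Z|$ equals the number of closed balls properly containing a maximum region of $g$ plus the number properly containing a maximum region of $g^{-1}$. So a bound $|gZ \,\triangle\, Z| \leq r$ is exactly a bound on how many closed balls sit strictly above the maximum regions of $g$ and of $g^{-1}$. The key geometric fact I would use is that in a compact ultrametric space the closed balls properly containing a fixed small ball form a finite totally ordered chain (any two intersecting balls are nested), so controlling these counts controls the ``depth'' at which the maximum partitions live.

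\medskip
\noindent
First I would translate the hypothesis into a statement about a single fixed reference structure. The obstacle to a naive argument is that there are infinitely many elements $g$ and, a priori, infinitely many possible maximum partitions, so I cannot just bound depth pointwise at each $x \in X$ and be done. Instead I want to produce \emph{one} partition $\mathcal{Q}$ of $X$ into closed balls, depending only on $r$ (and on $X$, $\sm$), with the property that every $g$ satisfying $|gZ \,\triangle\, Z| \leq r$ has its maximum partition refined by $\mathcal{Q}$ --- i.e. $g \in \Gamma_{\mathrm{ref}}(\mathcal{Q}, \mathcal{Q})$ in the notation of Lemma~\ref{lemma:RefineMax}. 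Once such a $\mathcal{Q}$ is found, Lemma~\ref{lemma:RefineMax} immediately gives that $\{g : |gZ\,\triangle\,Z| \leq r\}$ is a subset of the finite set $\Gamma_{\mathrm{ref}}(\mathcal{Q},\mathcal{Q})$, completing the proof.

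\medskip
\noindent
To build $\mathcal{Q}$, I would argue by compactness that there are only finitely many closed balls in $X$ whose proper ``superballs'' number at most $r$; more precisely, the condition forces every maximum region of $g$ (and of $g^{-1}$) that is \emph{not} all of $X$ to be a ball having at most $r$ proper superballs. The set of all closed balls $B \subsetneq X$ with at most $r$ proper superballs is a collection that, by the nesting/compactness properties, has a common refinement, or at least is contained in the balls at bounded depth; I would take $\mathcal{Q}$ to be the partition of $X$ into the finitely many balls realized as the ``deepest'' balls with exactly $r$ proper superballs (padding with $X$ itself where the chains are shorter). The point is that a maximum region of $g$ that is properly contained in too many balls would contribute too many elements to $Z \setminus gZ$ or $gZ \setminus Z$, violating the bound. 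Hence every maximum region lies no deeper than the level cut out by $\mathcal{Q}$, so $\mathcal{Q}$ refines the maximum partition of each such $g$.

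\medskip
\noindent
The main obstacle I anticipate is the bookkeeping that turns the raw inequality $|gZ \,\triangle\, Z| \leq r$ into the clean statement ``$\mathcal{Q}$ refines the maximum partition of $g$,'' because the count in Lemma~\ref{lemma:bijection} involves balls \emph{properly containing} maximum regions rather than the maximum regions themselves, and one must be careful that a maximum region equal to $X$ contributes nothing (having no proper superball) while deeper maximum regions each force a whole chain of distinct superballs into the symmetric difference. Making this precise --- showing that a maximum region at depth $k$ below $X$ contributes at least $k$ balls to $Z \setminus gZ$, so that the total bound $r$ caps the maximal depth uniformly --- is the crux, and from a uniform depth bound the existence of the single finite refining partition $\mathcal{Q}$, and hence the conclusion via Lemma~\ref{lemma:RefineMax}, follows routinely.
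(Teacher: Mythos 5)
Your proposal is correct and follows essentially the same route as the paper: use Lemma~\ref{lemma:bijection} to bound by $r$ the number of proper superballs of every maximum region of $g$ and of $g^{-1}$, produce a single partition (depending only on $r$) that refines all these maximum partitions, and conclude with Lemma~\ref{lemma:RefineMax} --- the paper's only difference is that it takes, for each $x\in X$, the smallest ball containing $x$ that is a maximum region of some $g$ in the set, rather than your uniform depth-$r$ level $\mathcal{Q}$. One small correction: where a chain of balls terminates before depth $r$ you should pad $\mathcal{Q}$ with the terminal singleton ball, not with $X$ itself, since otherwise $\mathcal{Q}$ is not a partition.
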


\begin{proof}
Let $\Gamma_r = \{ g\in\Gamma ~|~ |gZ\,\triangle\,Z|\leq r\}$.
Since $|gZ\,\triangle\,Z|=|g^{-1}Z\,\triangle\,Z|$,
$g\in\Gamma_r$ if and only if $g^{-1}\in\Gamma_r$.
For each $x\in X$, let 
$$M_{r,x} =
\{ R ~|~ R \hbox{ is a maximum region for some } g\in\Gamma_r \hbox{ and } x\in R\}.$$
By Lemma~\ref{lemma:bijection} if $g\in\Gamma_r$, then 
the number of closed balls of $X$ properly containing a maximum region of $g$
is less than or equal to $r$.
In particular, if $R\in M_{r,x}$, there are at most $r$ closed balls of
$X$ properly containing $R$. Since $M_{r,x}$ is totally ordered by inclusion,
it follows that $M_{r,x}$ is finite and 
there exists $R_{r,x}\in M_{r,x}$ such that $R_{r,x}\subseteq R$
for all $R\in M_{r,x}$.
The set 
$\mathcal{P}_r := \{ R_{r,x} ~|~ x\in X\}$ is a partition of $X$ and each 
$R_{r,x}$ is a region for $g$ and for $g^{-1}$, for all $g\in \Gamma_r$.
That is to say $\mathcal{P}_r$ refines the maximum partitions of 
both $g$ and $g^{-1}$ for all $g\in \Gamma_r$.
Setting $\mathcal{P}_+ =\mathcal{P}_r=\mathcal{P}_-$,
this means $\Gamma_r\subset\Gamma_{\mathrm{ref}}(\mathcal P_\pm)
$
and the result follows from Lemma~\ref{lemma:RefineMax}.
\end{proof}

The proof of Theorem~\ref{thm:ZipperExistence} is now complete.

Even though it will be shown in the appendix that 
a zipper action gives rise to a proper action on a space with walls, the next example is included to show
that this does not happen  in the most
naive way.
See the appendix for definitions and references.

\begin{example}
\label{example:NoWalls}
Consider the alphabet $A=\{ 0,1\}$ and Thompson's group $V\leq LS(A^\omega)$ as in Example~\ref{example:lop}.
The construction above gives sets $Z\subseteq\mathcal{E}$ and a
zipper action $V\curvearrowright\mathcal{E}$. 
It might be expected that $\mathcal{W} :=\{ (gZ,\mathcal{E}\setminus gZ) ~|~ g\in V\}$ is a set of walls for $\mathcal{E}$.
However, this is not the case. Specifically, we show there exists
$[f_1,B_1], [f_2,B_2]\in\mathcal{E}$ and an infinite subset
$G\subseteq V$ such that 
$[f_1,B_1]\in gZ$ and $[f_2,B_2]\notin gZ$ for every $g\in G$ 
(that is, there are two elements of $\mathcal{E}$ separated by infinitely many
walls).
Let $B_1=0A^\omega$ and $B_2=1A^\omega$.
Let $f_1\co B_1\to A^\omega$ be the inclusion and let $f_2\co B_2\to A^\omega$ be defined by
$$\begin{cases}
f_2(10w) = & 10w \\
f_2(11w) = & 111w \end{cases},
\text{~~~for all $w\in A^\omega$.}$$
Let $G=\{ g\in V ~|~ g \text{~~is a local isometry and $g|B_1=f_1\}$.}$
The required conditions are readily checked. (Note also that
$[f_2,B_2]\in VZ$ so that a set of walls will not result by reducing the size of $\mathcal{E}$.)
\end{example}

%



\appendix
\section{Appendix by Daniel S. Farley: Zipper actions, spaces with walls,
and CAT($0$) cubical complexes}

The purpose of this appendix is to show that the property of having a
zipper action is equivalent to having a proper action on a space with walls.
Spaces with walls were introduced by Haglund and Paulin \cite{HagPau}, who
wanted a common language for describing a range of combinatorial structures,
among them CAT($0$) cubical complexes.  Sageev \cite{Sag} had in effect
shown that
CAT($0$) cubical complexes are spaces with walls, so a group acting
properly on a CAT($0$) cubical complex also acts properly on a space with
walls   
(see Cherix et al.\ \cite[Section 1.2.7]{CCJJV} for a
discussion about the relevance of this to the Haagerup property and for
more references).  Chatterji and Niblo \cite{ChN} and Nica \cite{Nica}
proved the converse, namely, that a group that acts properly on a space
with walls also acts properly on a CAT($0$) cube complex. Thus, having a
proper action of a group $\Gamma$ on a CAT($0$) cube complex is
equivalent to having a zipper action of $\Gamma$. In fact, a zipper is
closely related to Sageev's notion of an almost invariant set in
\cite{Sag} and the  the discussion in this appendix is implicit in
\cite{Sag}. At any rate, the experts will find this result familiar.

\begin{theorem}
\label{thm:appendix}
A discrete group $\Gamma$ has a zipper action if and only if $\Gamma$
acts properly on a space with walls.
\end{theorem}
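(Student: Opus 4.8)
The plan is to prove the two implications separately; in each direction I will exhibit an explicit model and reduce both defining finiteness conditions to the statement that a given pair of points is separated by a controlled number of walls. Throughout, for a space with walls $(Y,\mathcal{W})$ I write $d_w(y,y')$ for the number of walls separating $y$ from $y'$, and I take \emph{proper} to mean that $\{g\in\Gamma ~|~ d_w(gy_0,y_0)\le r\}$ is finite for every $r>0$ and some (equivalently, any) base point $y_0\in Y$.

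\emph{From a zipper action to a space with walls.} Suppose $\Gamma\curvearrowright\mathcal{E}$ is a zipper action with zipper $Z\subseteq\mathcal{E}$. Example~\ref{example:NoWalls} shows that the partitions $\{gZ,\mathcal{E}\setminus gZ\}$ of $\mathcal{E}$ do \emph{not} form a space with walls, so instead I would put the wall structure on the group $\Gamma$ itself, acting on itself by left translation. For each $e\in\mathcal{E}$ set $H_e=\{g\in\Gamma ~|~ e\in gZ\}$, let $\mathfrak{w}_e=\{H_e,\Gamma\setminus H_e\}$, and let $\mathcal{W}=\{\mathfrak{w}_e ~|~ e\in\mathcal{E}\}$ (discarding any trivial partition). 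The computation driving everything is that $g_1,g_2\in\Gamma$ lie on opposite sides of $\mathfrak{w}_e$ exactly when $e\in g_1Z \,\triangle\, g_2Z$; hence the number of walls separating them is $|g_1Z \,\triangle\, g_2Z|=|(g_1^{-1}g_2)Z \,\triangle\, Z|$, which is finite by condition (1) of Definition~\ref{def:ZipperAction}. So $(\Gamma,\mathcal{W})$ is a space with walls. A one-line check gives $hH_e=H_{he}$, so left translation permutes $\mathcal{W}$ compatibly with the action on $\mathcal{E}$; and with base point the identity, $d_w(g,1)=|gZ \,\triangle\, Z|$, so properness is precisely condition (2).

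\emph{From a space with walls to a zipper action.} Conversely, assume $\Gamma$ acts properly on a space with walls $Y$. Let $\mathcal{E}$ be the set of all halfspaces (the two sides of each wall), with the induced left action $g\cdot A=gA$; fix $y_0\in Y$ and set $Z=\{A\in\mathcal{E} ~|~ y_0\notin A\}$. Then $gZ=\{A\in\mathcal{E} ~|~ gy_0\notin A\}$, and a halfspace lies in $Z \,\triangle\, gZ$ exactly when it contains precisely one of $y_0,gy_0$. Both sides of a wall separating $y_0$ from $gy_0$ have this property, while neither side of a non-separating wall does; therefore $|gZ \,\triangle\, Z|=2\,d_w(y_0,gy_0)$. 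Condition (1) now follows from finiteness of $d_w$, and condition (2) from properness, so $(\mathcal{E},Z)$ is a zipper action.

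\emph{Main obstacle.} The genuine content is the choice of the two models --- putting the walls on $\Gamma$ rather than on $\mathcal{E}$ in the first direction (as Example~\ref{example:NoWalls} forces), and using the halfspaces as the zipper set $\mathcal{E}$ in the second. Once the models are fixed, the only subtleties are bookkeeping: verifying that the action permutes the walls, and keeping track of the factor of $2$ coming from counting halfspaces rather than walls. One should also note that the wall pseudometric on $\Gamma$ need not separate points, but only finitely many elements collapse together (the stabilizer of $Z$ is finite by condition (2)), which matches the observation, recorded after Definition~\ref{def:ZipperAction}, that the kernel of a zipper action is finite.
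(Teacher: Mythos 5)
Your proof is correct and follows essentially the same route as the paper's: in the forward direction you index walls by points of $\mathcal{E}$ exactly as the appendix does (you place them on $\Gamma$ itself rather than on the orbit $S=\{gZ ~|~ g\in\Gamma\}$, but these differ only by the finite-to-one equivariant map $g\mapsto gZ$, so the wall pseudometrics agree), and in the reverse direction you take $\mathcal{E}$ to be the set of half-spaces and $Z$ the half-spaces determined by a basepoint, recovering $|gZ\,\triangle\,Z|=2d(y_0,gy_0)$ just as in the paper. The one caveat --- shared with the paper's own argument --- is that the identity $d_w(g,1)=|gZ\,\triangle\,Z|$ tacitly counts the walls as a family indexed by $\mathcal{E}\setminus(A\cup B)$ rather than as a set (distinct points of $\mathcal{E}$ can define the same partition of $\Gamma$, and collapsing such coincidences can in principle destroy properness), so both proofs implicitly work with indexed walls; this is not a discrepancy between your argument and the paper's.
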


\begin{definition}
Let $S$ be a set and let $\Gamma$ be a group.
\begin{enumerate}
\item A {\it wall in $S$} is a pair $W=\{ H_1, H_2\}$ such that
$W$ is a partition of $S$; i.e., $S= H_1\cup H_2$, $H_1\cap
H_2=\emptyset$, and $H_1\not=\emptyset\not= H_2$.
\item If $W = \{ H_1, H_2\}$ is a wall in $S$, then $H_1$ and
$H_2$ are called {\it half-spaces} in $S$.
\item Elements $x, y\in S$ are {\it separated by} the wall $W$ if $x\in
H_1$ and $y\in H_2$.
\item The set $S$ is a {\it space with walls} if there is given a set of
walls in $S$ such that for every
$x, y\in S$ there are at most finitely many walls separating $x$ and $y$.
In this case, define
$d(x,y)$ to be the number of walls separating $x$ and $y$ and note that
$d$ is a pseudometric (that is, it is symmetric and satisfies the
triangle inequality).
\item The group $\Gamma$ {\it acts on the space $S$ with walls} if there
is an action of $\Gamma$ on $S$ such that $\Gamma$ permutes the walls. 
In this case the action of $\Gamma$ is by isometries of the pseudometric.

\item The group $\Gamma$ is said to act {\it properly on the space $S$
with walls} if for all $r\in\br$ and for all $p\in S$,
$\{ g\in\Gamma ~|~ d(gp, p)< r\}$ is finite.
\end{enumerate}
\end{definition}

Note that a finite group $\Gamma$ has a zipper action (let $\mathcal E =
Z = \Gamma$) and also acts properly on a space with walls
(let $S$ be the set of all partitions of $\Gamma$ into two nonempty,
disjoint subsets). Therefore, in the proof of
Theorem~\ref{thm:appendix}, it is assumed that $\Gamma$ is infinite.

\bigskip
\begin{proof*} {\it of Theorem~\ref{thm:appendix}.}
Suppose first that there is a zipper action
$\Gamma\curvearrowright\mathcal E$ with zipper $Z\subseteq\mathcal E$.
Define
$$A = \bigcap_{g\in\Gamma}gZ ~\text{ and }~ 
B=\bigcap_{g\in\Gamma}\left(gZ\right)^{c}.$$
Note that $\mathcal E\setminus(A\cup B)\not=\emptyset$, for otherwise
$Z \triangle gZ = \emptyset$ for all $g \in \Gamma$, and so $Z$ would
fail to be a zipper.
Let $S=\{ gZ\subseteq \mathcal E ~|~ g\in\Gamma\}$.
If $x\in \mathcal E$, define $H_x^+ =\{ gZ\in S ~|~ x\in gZ\}$ and
$H_x^- =\{ gZ\in S ~|~ x\notin gZ\}$. Then $\{ \{ H_x^+, H_x^- \}  ~|~
x\in\mathcal E\setminus(A\cup B)\}$ is a set of walls for $S$.
The condition that $x\notin A\cup B$ ensures that
$H_x^+\not=\emptyset\not= H_x^-$.
To see that for $g_1, g_2\in \Gamma$,
there are only finitely many walls separating $g_1Z$ and $g_2Z$,
observe that $\{ H_x^+, H_x^- \} $ separates
$g_1Z$ and $g_2Z$ if and only if $x\in g_1Z\, \triangle \, g_2Z$, and
$x\in g_1Z\, \triangle \, g_2Z\subseteq
(g_1Z\, \triangle \, Z)\cup  (g_2Z \, \triangle \, Z)$, which is finite.
The action $\Gamma\curvearrowright S$ is given by $h\cdot gZ = hgZ$.
Since $hH_x^\pm = H_{hx}^\pm$ for all $h\in\Gamma$ and $x\in\mathcal
E\setminus (A\cup B)$ (and $x\in A\cup B$ if and only if
$hx\in A\cup B$), it follows that $\Gamma$ permutes the walls.
Since, by the second property of a zipper action (Definition~\ref{def:ZipperAction}), there
are, for a given $r > 0$, at most finitely many $g \in \Gamma$
such that $d(gZ, Z) = |gZ \triangle Z| < r$, 
the action
of $\Gamma$ is proper.  


Conversely,
suppose there is a proper action $\Gamma\curvearrowright S$ of $\Gamma$
on a space $S$ with walls.
Let $\mathcal E$ be the set of all half-spaces of $S$.
For each $x\in S$, define $Z_x = \{ H\in\mathcal E ~|~ x\in H\}$.
Note that $g\cdot Z_x = Z_{gx}$ for all $g\in\Gamma$ and $x\in S$.
Fix a base point $p\in S$ and let $Z=Z_p$ be the zipper for the action.
For each $g\in\Gamma$, it follows that
\begin{eqnarray*}
Z_{gp}\, \triangle \, Z_p  & =  &   
\{ H \in \mathcal{E} \mid gp\in H \text{ and } p\notin H\}
\cup \{ H \in \mathcal{E} \mid gp\notin H \text{ and } p\in H\} \\
& = & \{ H \in \mathcal{E} \mid H \text{ is a half-space of a wall
separating } p \text{ and } gp\}.
\end{eqnarray*}
Thus,
$\vert Z_{gp}\, \triangle \, Z_p\vert = 2d(p,gp)$ and the properties
required of a zipper action follow.
\end{proof*}

{\footnotesize
\bibliographystyle{plain}
\addcontentsline{toc}{section}{\refname}
\bibliography{biblio}

\def\cprime{$'$}
\begin{thebibliography}{10}

\bibitem{BaK}
Hyman Bass and Ravi Kulkarni.
\newblock Uniform tree lattices.
\newblock {\em J. Amer. Math. Soc.}, 3(4):843--902, 1990.

\bibitem{BaL}
Hyman Bass and Alexander Lubotzky.
\newblock {\em Tree lattices}, volume 176 of {\em Progress in Mathematics}.
\newblock Birkh\"auser Boston Inc., Boston, MA, 2001.
\newblock With appendices by Bass, L. Carbone, Lubotzky, G. Rosenberg and J.
  Tits.

\bibitem{BaT}
Hyman Bass and J.~Tits.
\newblock Discreteness criteria for tree automorphism groups.
\newblock Appendix to [2].

\bibitem{Can}
J.~W. Cannon, W.~J. Floyd, and W.~R. Parry.
\newblock Introductory notes on {R}ichard {T}hompson's groups.
\newblock {\em Enseign. Math. (2)}, 42(3-4):215--256, 1996.

\bibitem{ChN}
Indira Chatterji and Graham Niblo.
\newblock From wall spaces to {$\rm CAT(0)$} cube complexes.
\newblock {\em Internat. J. Algebra Comput.}, 15(5-6):875--885, 2005.

\bibitem{CCJJV}
Pierre-Alain Cherix, Michael Cowling, Paul Jolissaint, Pierre Julg, and Alain
  Valette.
\newblock {\em Groups with the {H}aagerup property}, volume 197 of {\em
  Progress in Mathematics}.
\newblock Birkh\"auser Verlag, Basel, 2001.
\newblock Gromov's a-T-menability.

\bibitem{CMV}
Pierre-Alain Cherix, Florian Martin, and Alain Valette.
\newblock Spaces with measured walls, the {H}aagerup property and property
  ({T}).
\newblock {\em Ergodic Theory Dynam. Systems}, 24(6):1895--1908, 2004.

\bibitem{Far2}
Daniel~S. Farley.
\newblock Finiteness and {$\rm CAT(0)$} properties of diagram groups.
\newblock {\em Topology}, 42(5):1065--1082, 2003.

\bibitem{Far}
Daniel~S. Farley.
\newblock Proper isometric actions of {T}hompson's groups on {H}ilbert space.
\newblock {\em Int. Math. Res. Not.}, (45):2409--2414, 2003.

\bibitem{Far3}
Daniel~S. Farley.
\newblock Actions of picture groups on {CAT}(0) cubical complexes.
\newblock {\em Geom. Dedicata}, 110:221--242, 2005.

\bibitem{HagPau}
Fr{\'e}d{\'e}ric Haglund and Fr{\'e}d{\'e}ric Paulin.
\newblock Simplicit\'e de groupes d'automorphismes d'espaces \`a courbure
  n\'egative.
\newblock In {\em The Epstein birthday schrift}, volume~1 of {\em Geom. Topol.
  Monogr.}, pages 181--248 (electronic). Geom. Topol. Publ., Coventry, 1998.

\bibitem{HK}
Nigel Higson and Gennadi Kasparov.
\newblock {$E$}-theory and {$KK$}-theory for groups which act properly and
  isometrically on {H}ilbert space.
\newblock {\em Invent. Math.}, 144(1):23--74, 2001.

\bibitem{HugTUNG}
Bruce Hughes.
\newblock Trees, ultrametrics, and noncommutative geometry.
\newblock arXiv:math/0605131v2 [math.OA].

\bibitem{Hug}
Bruce Hughes.
\newblock Trees and ultrametric spaces: a categorical equivalence.
\newblock {\em Adv. Math.}, 189(1):148--191, 2004.

\bibitem{Nek}
Volodymyr Nekrashevych.
\newblock {\em Self-similar groups}, volume 117 of {\em Mathematical Surveys
  and Monographs}.
\newblock American Mathematical Society, Providence, RI, 2005.

\bibitem{NekJOT}
Volodymyr~V. Nekrashevych.
\newblock Cuntz-{P}imsner algebras of group actions.
\newblock {\em J. Operator Theory}, 52(2):223--249, 2004.

\bibitem{Nica}
Bogdan Nica.
\newblock Cubulating spaces with walls.
\newblock {\em Algebr. Geom. Topol.}, 4:297--309 (electronic), 2004.

\bibitem{Rob}
Alain~M. Robert.
\newblock {\em A course in {$p$}-adic analysis}, volume 198 of {\em Graduate
  Texts in Mathematics}.
\newblock Springer-Verlag, New York, 2000.

\bibitem{Rov99}
Claas~E. R{\"o}ver.
\newblock Constructing finitely presented simple groups that contain
  {G}rigorchuk groups.
\newblock {\em J. Algebra}, 220(1):284--313, 1999.

\bibitem{Sag}
Michah Sageev.
\newblock Ends of group pairs and non-positively curved cube complexes.
\newblock {\em Proc. London Math. Soc. (3)}, 71(3):585--617, 1995.

\end{thebibliography}
}

{\footnotesize
\noindent
Department of Mathematics\\
Vanderbilt University\\
Nashville, TN 37240 USA\\
{bruce.hughes@vanderbilt.edu}
}

\end{document}